\chardef\bslash=`\\ 
\def\verbatim{\interlinepenalty\@M \@verbatim
\leftskip\@totalleftmargin\advance\leftskip2pc
\frenchspacing\@vobeyspaces \@xverbatim} \makeatother \hfuzz1pc
\def\dgt@k{\dg@DX=-3 \dg@DY=2 \dg@SIZE=3}
\def\dgt@kk{\dg@DX=3 \dg@DY=-1 \dg@SIZE=3}
\theoremstyle{plain} \newtheorem{thm}{Theorem}[section]
\newtheorem{lemma}[thm]{Lemma}
\newtheorem{prop}[thm]{Proposition}
\theoremstyle{definition} \newtheorem{rem}[thm]{Remark}
\newtheorem{defin}[thm]{Definition} 
 \newtheorem{exam}[thm]{Example}
\newcommand{\R}{\mathbb R_{\mathrm{max}}}
\newcommand{\supp}{\operatorname{\mathrm{supp}}}
\newcommand{\comp}{\operatorname{\mathbf{Comp}}}
\newcommand{\diam}{\operatorname{\mathrm{diam}}}
\newcommand{\id}{\mathrm{id}}
\begin{document}

\title[Idempotent  measures on compact metric spaces]
{Spaces of idempotent  measures of compact metric spaces}

\author{Lidia Bazylevych}
\address{Department of Mechanics and Mathematics, Lviv National University,  Universtytetska Str. 1,
79000 Lviv, Ukraine}
\email{izar@litech.lviv.ua}

\author{Du\v{s}an~Repov\v{s}}
\address{Institute of Mathematics, Physics and Mechanics, and Faculty of Education,
University of Ljubljana, P.O.B. 2964, Ljubljana, 1001, Slovenia}
\email{dusan.repovs@guest.arnes.si}

\author{Michael Zarichnyi}
\address{Department of Mechanics and Mathematics, Lviv National University,  Universtytetska Str. 1,
79000 Lviv, Ukraine}
\email{mzar@litech.lviv.ua}

\thanks{}
\subjclass[2000]{Primary 54C65, 52A30; Secondary 28A33}

\keywords{Compact metric space, Hilbert cube bundle, idempotent measure, functor $P$ of probability measures, Maslov integral,
absolute retract, disjoint approximation property, continuous selection, multivalued map }

\date{March 29, 2008}



\begin{abstract} We investigate certain
geometric properties of the
spaces of idempotent  measures. In particular, we
prove that
the space of idempotent  measures on an infinite compact metric
space is homeomorphic to the Hilbert cube.
\end{abstract}

\maketitle
\section{Introduction}

The functor $P$ of probability measures which acts on the category
$\comp$ of compact metrizable spaces 
has been
investigated by many
authors 
(see e.g., the survey 
\cite{FF}). Geometric
properties of spaces of the form $P(X)$ were established, e.g., in
\cite{F}. In particular, it was proved in \cite{F} that the map
$P(f)\colon P(X)\to P(Y)$ is a trivial $Q$-bundle (i.e., a trivial
bundle whose fiber is the Hilbert cube $Q$) for an open map of
finite-dimensional compact metric spaces with infinite fibers.

 The space of idempotent probability measures was
systematically studied in \cite{Z} (see also \cite{Z2}), where it
was
proved, in particular, that the space $I(X)$ of idempotent
probability measures on a topological space $X$ is compact
Hausdorff if such is also $X$.
The aim of this paper, which can be considered as a continuation
of \cite{Z}, is to establish certain
geometric properties of the
functor $I$. 

In particular, we shall
prove
that $I(X)$ is homeomorphic to
the Hilbert cube for every infinite compact metric space $X$. The
construction of idempotent measures is functorial in the category
of compact Hausdorff spaces and we consider also the geometry of
the maps $I(f)$, for some maps $f$. In particular, we show that,
like in the case of probability measures, there exists an open map
$f\colon X\to Y$ of compact metric spaces such that $f$ has
infinite fibers and the map $I(f)$ is not a trivial $Q$-bundle.

The paper is organized as follows. In Section \ref{s:sim} we
provide the necessary information concerning the spaces of
idempotent measures. Section \ref{s:metr} is devoted to
(pseudo)metrization of the spaces $I(X)$, for a metric space $X$.
The main results on the topology of the spaces $I(X)$, for compact
metric spaces $X$, are given in Section \ref{s:mcomp}. We also
consider the geometry of the maps $I(f)$, for some maps $f$ of
compact metric spaces and this allows us to describe the topology
of the spaces $I(X)$ for some nonmetrizable compact Hausdorff
spaces $X$ (Section \ref{s:maps}).

\section{Preliminaries}\label{s:1}

The space $Q=\prod_{i=1}^\infty[0,1]_i$ is called the {\em Hilbert
cube}. Recall that an {\em absolute retract} (AR) is a metrizable
space which is a retract of every space in which it lies as a
closed subset. The following characterization theorem was proved in
\cite{T}.

\begin{thm}[Toru\'nczyk's characterization theorem] A compact
metric space $X$ is homeomorphic to the Hilbert cube if the
following two conditions are satisfied:
\begin{enumerate}
\item $X$ is an absolute retract;
\item $X$ satisfies the disjoint approximation property (DAP),
i.e. every two maps of a metric space into $X$ can be approximated
by maps with disjoint images.
\end{enumerate}
\end{thm}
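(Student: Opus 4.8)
This is Toru\'nczyk's characterization of the Hilbert cube, proved in full in \cite{T}; we use it here as a black box and only indicate how one would argue it. Throughout, call a surjection between compact metric spaces a \emph{near-homeomorphism} if it is a uniform limit of homeomorphisms, so that its source and target are then homeomorphic. The plan is first to \emph{stabilize}: one shows that every compact metric absolute retract $X$ is a Hilbert cube factor, i.e. $X\times Q\cong Q$. This is not part of the characterization itself --- it follows from West's theorem that a compact ANR has the homotopy type of a finite polyhedron $K$ (which may be taken contractible here, since $X$ is an AR), from the fact that $X\times Q$ and $K\times Q$ are compact $Q$-manifolds, and from Chapman's classification of compact $Q$-manifolds by simple homotopy type, which gives $X\times Q\cong K\times Q\cong Q$ because a contractible finite polyhedron is a Hilbert cube factor. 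Granting this, it suffices to prove that the projection $p\colon X\times Q\to X$ is a near-homeomorphism, for then $X\cong X\times Q\cong Q$.

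Next I would \emph{locate the difficulty}. Writing $Q=\prod_{n\ge 1}Q_n$ with each $Q_n\cong Q$ presents $X\times Q$ as the inverse limit of the spaces $X\times Q_1\times\dots\times Q_n$ under the coordinate projections $\rho_n$. For $n\ge 2$ the source of $\rho_n$ is $(X\times Q_1\times\dots\times Q_{n-1})\times Q_n$, and $X\times Q_1\times\dots\times Q_{n-1}\cong Q$ by the stabilization step, so $\rho_n$ is, up to homeomorphism, the projection $Q\times Q\to Q$, which is a near-homeomorphism. By Brown's approximation theorem for inverse limits of near-homeomorphisms, everything therefore comes down to the single remaining bonding map $\rho_1=p\colon X\times Q\to X$, and it is exactly here that the disjoint approximation property must enter: for instance $[0,1]$ is a compact absolute retract for which $p\colon[0,1]\times Q\to[0,1]$ is visibly not a near-homeomorphism, and $[0,1]$ fails DAP.

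Finally one would \emph{absorb the cube}, using that $X$ is a compact absolute retract with DAP. Fixing an open cover $\mathcal U$ of $X$, the idea is to construct a sequence of homeomorphisms $h_k\colon X\times Q\to X\times Q$ whose partial compositions $g_k=h_k\circ\dots\circ h_1$ stay $\mathcal U$-close to $p$ in the $X$-direction while progressively squeezing the cube fibres, arranged so that $(g_k)$ converges uniformly to a map with image a single slice $X\times\{\ast\}\cong X$; at each stage DAP provides the room to make the relevant slices disjoint, and the $Z$-set unknotting (homeomorphism extension) theorem in the Hilbert cube $X\times Q$ turns that into the ambient homeomorphism $h_k$. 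If the convergence is controlled, the induced map $X\times Q\to X\times\{\ast\}\cong X$ is simultaneously a homeomorphism and $\mathcal U$-close to $p$, so $p$ is a near-homeomorphism and the stabilization step concludes the argument. The hard part is exactly this last one: the stabilization already rests on the deep finite-homotopy-type and $Q$-manifold classification theorems, and the infinite inductive construction demands delicate bookkeeping to make an infinite composition of ambient homeomorphisms converge to a map that is at once onto a single slice and globally close to the projection, while invoking DAP only through finitely many slices at a time --- which is why we do not reproduce the argument and simply invoke \cite{T}.
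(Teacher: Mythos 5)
The paper offers no proof of this statement at all --- it is a background result quoted verbatim from \cite{T} --- so your decision to treat it as a black box with the same citation is exactly what the authors do, and your accompanying outline of the standard argument (stabilization $X\times Q\cong Q$ via West's finite-homotopy-type theorem and Chapman's classification, followed by showing the projection $X\times Q\to X$ is a near-homeomorphism using DAP and $Z$-set unknotting) is accurate as a sketch of how \cite{T} actually proceeds. No issues.
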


The following notion was introduced in \cite{Ho}. A {\em
$c$-structure} on a topological space $X$ is an assignment to
every nonempty finite subset $A$ of $X$ a contractible subspace
$F(A)$ of $X$ such that $F(A)\subset F(A')$ whenever $A\subset
A'$. A pair $(X,F)$, where $F$ is a $c$-structure on $X$ is called
a {\em $c$-space}. A subset $E$ of $X$ is called an {\em $F$-set}
if $F(A)\subset E$ for any finite $A\subset E$. A metric space
$(X,d)$ is said to be a {\em metric $l.c.$-space} if all the open
balls are $F$-sets and all open $r$-neighborhoods of $F$-sets are
also $F$-sets.
In fact, it was proved in \cite{Ho1} that every compact metric
$l.c.$-space is an AR.

A map $f\colon X\to Y$ is {\em trivial $Q$-bundle} if $f$ is
homeomorphic to the projection map $p_1\colon Y\times Q\to Y$.
The following definition is due to Shchepin \cite{S}.
\begin{defin} A map $f\colon X\to Y$ is said to be {\em soft} provided
that for every commutative diagram
\begin{equation}\label{myeq}
\xymatrix{A\ar@{^{(}->}[d]\ar[r]^\varphi&X\ar[d]^f\\ Z\ar[r]_\psi&
Y}\end{equation} such that $Z$ is a paracompact space  and $A$ is
a closed subset of $Z$ there exists a map $\Phi\colon Z\to X$ such
that $f\Phi=\psi$ and $\Phi|A=\varphi$.
\end{defin}

A map $f\colon X\to Y$ of compact metric spaces is said to satisfy
the {\em fibrewise disjoint approximation property} if, for every
$\varepsilon>0$,  there exist maps $g_1,g_2\colon X\to Y$ such
that
\begin{enumerate}
\item $fg_1=fg_2=f$;
\item $d(1_X,g_i)<\varepsilon$, $i=1,2$;
\item $g_1(X)\cap g_2(X)=\emptyset$.
\end{enumerate}

The following result was proved in \cite{TW}.
\begin{thm}[Toru\'nczyk-West characterization theorem for $Q$-manifold
bundles]\label{t:bundle} A map $f\colon X\to Y$ of compact metric
ANR-spaces is a trivial $Q$-bundle if $f$ is soft and $f$
satisfies the fibrewise disjoint approximation property.
\end{thm}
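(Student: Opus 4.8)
The plan is to read Theorem \ref{t:bundle} as the fibered (parametrized) analogue of Toru\'nczyk's characterization of the Hilbert cube: softness plays the role of the fibered ``absolute retract'' condition, and the fibrewise disjoint approximation property plays the role of the fibered DAP. Accordingly I would first extract the pointwise consequences of the two hypotheses, and then carry out a version of Toru\'nczyk's infinite absorption/general-position construction ``with parameters in $Y$''.

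First I would show that softness forces $f$ to be a Hurewicz fibration all of whose fibers $F_y=f^{-1}(y)$ are compact AR's. For the lifting property, apply the defining diagram with $Z$ replaced by $Z\times[0,1]$ and $A=Z\times\{0\}$: an initial lift of a homotopy in $Y$ then extends to a lift of the whole homotopy, so $f$ is a fibration over every paracompact (in particular every metrizable) space. For the fibers, apply softness with $\psi$ the constant map at $y$: any map from a closed subset of a metric space into $F_y$ extends over the whole space into $F_y$, so $F_y$ is an absolute extensor for metrizable spaces, hence, being metrizable and compact, a compact AR (and in particular nonempty, so $f$ is onto the relevant part of $Y$). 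Next, restricting the maps $g_1,g_2$ supplied by the fibrewise disjoint approximation property to $F_y$ and using $fg_i=f$, one obtains maps $g_i|_{F_y}\colon F_y\to F_y$ that are $\varepsilon$-close to $1_{F_y}$ and have disjoint images; thus each $F_y$ satisfies the DAP, and by Toru\'nczyk's characterization theorem every fiber is homeomorphic to $Q$.

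The heart of the matter is then to upgrade ``every fiber is $Q$'' to ``$f$ is homeomorphic over $Y$ to the projection $\pr_Y\colon Y\times Q\to Y$.'' For this I would run Toru\'nczyk's construction fibrewise: inductively build fibre-preserving maps $h_n\colon X\to Y\times Q$ in which, at each stage, softness is used to solve the relevant extension/lifting problem continuously in the base coordinate (this is precisely the statement that $f$ is an absolute extensor in the category of spaces over $Y$), while the fibrewise disjoint approximation property is used to push successive approximations into general position so that the limit map is injective on every fiber. Compactness of $Y$ (and of $X$) furnishes the uniform estimates needed for $(h_n)$ to converge to a map $h\colon X\to Y\times Q$ with $\pr_Y\circ h=f$; the general-position corrections make $h$ a continuous bijection, hence, by compactness, a homeomorphism.

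The main obstacle is exactly this last, parametrized step: arranging the infinite absorption process so that it converges to a \emph{fibrewise} homeomorphism, uniformly over the base. This requires fibered counterparts of Toru\'nczyk's $Z$-set unknotting and general-position lemmas, and it is here that the full strength of softness is indispensable — without it one could not perform the successive extensions continuously in $Y$. (In the present paper the theorem is only invoked, being due to Toru\'nczyk and West \cite{TW}; the sketch above is meant to indicate why softness and the fibrewise disjoint approximation property are the natural fibered analogues of the AR and DAP hypotheses in Toru\'nczyk's absolute characterization of $Q$.)
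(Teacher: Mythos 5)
The paper offers no proof of this statement at all: Theorem~\ref{t:bundle} is quoted verbatim as a known result of Toru\'nczyk and West \cite{TW}, so there is no argument of the authors' to compare yours against. Judged on its own terms, your first paragraph is correct and is the standard way to extract the pointwise content of the hypotheses: softness applied to the constant diagram $\psi\equiv y$ shows each fiber $F_y$ is a compact absolute extensor, hence a compact AR (and, with $A=\emptyset$, that $f$ admits a section, hence is onto); restricting the maps $g_1,g_2$ of the fibrewise DAP to a fiber and composing them with arbitrary maps into $F_y$ gives the DAP for $F_y$ (you need the composition step, since the definition only hands you approximations of $1_X$); Toru\'nczyk's characterization then gives $F_y\cong Q$.

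The genuine gap is that everything after that --- which is the entire content of the theorem --- is asserted rather than proved. ``Every fiber is $Q$ and $f$ is a soft fibration'' does not by itself yield local, let alone global, triviality; the passage from fiberwise information to a fibre-preserving homeomorphism $X\to Y\times Q$ is exactly what the Toru\'nczyk--West memoir establishes, via a parametrized near-homeomorphism criterion, fibered $Z$-set unknotting, and a controlled absorption scheme whose convergence estimates must be uniform over $Y$. Your third paragraph names these ingredients but does not construct them, and the claim that the limit of your maps $h_n$ is a continuous bijection is precisely the point at which the general-position bookkeeping has to be done; nothing in the sketch guarantees injectivity on fibers in the limit. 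You are candid that the theorem is being invoked from \cite{TW}, and for the purposes of this paper that is the right thing to do, but as a proof attempt the proposal stops exactly where the difficulty begins.
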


The following is a
generalization of the Michael Selection Theorem --
see \cite{Ho1} for the proof. Recall that a multivalued map
$F\colon X\to Y$ of topological spaces is called {\em lower
semicontinuous} if, for any open subset $U$ of $Y$, the  set
$\{x\in X\mid F(x)\cap U\neq\emptyset\}$ is open in $X$. A {\em
selection} of a multivalued map $F\colon X\to Y$ is a
(single-valued) map $f\colon X\to Y$ such that $f(x)\in F(x)$, for
every $x\in X$.
\begin{thm}\label{t:ho} Let $(X,d,F)$ be a complete metric $l.c.$-space. Then
any lower semicontinuous multivalued map $T\colon Y\to X$ of a
paracompact space $Y$ whose values are nonempty closed $F$-sets
has a continuous selection.
\end{thm}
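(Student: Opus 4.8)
The plan is to imitate Michael's proof of the convex-valued selection theorem, with convex combinations replaced by the operation $F$ of the $c$-structure. I would construct continuous maps $f_0,f_1,f_2,\dots\colon Y\to X$ together with a summable sequence $\varepsilon_0,\varepsilon_1,\dots$ of positive reals tending to $0$ (say $\varepsilon_n=2^{-n}$) such that each $f_n$ is an \emph{$\varepsilon_n$-selection}, i.e. $d(f_n(y),T(y))<\varepsilon_n$ for all $y\in Y$, and such that $d(f_{n+1}(y),f_n(y))<\varepsilon_n+\varepsilon_{n+1}$ for all $y$. Then $(f_n)$ is uniformly Cauchy, hence converges uniformly to a map $f\colon Y\to X$, which is continuous since $(X,d)$ is complete; and because $x\mapsto d(x,T(y))$ is $1$-Lipschitz while $d(f_n(y),T(y))\to0$, we get $d(f(y),T(y))=0$, whence $f(y)\in T(y)$ as $T(y)$ is closed. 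So $f$ is the desired selection, and the completeness and closedness hypotheses enter only at this last step.

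The heart of the matter is the following approximation statement: \emph{if $S\colon Y\to X$ is lower semicontinuous with nonempty $F$-set values and $\delta>0$, then $S$ admits a continuous $\delta$-selection}. To prove it, note that for each $x\in X$ the set $W_x=\{y\in Y: d(x,S(y))<\delta\}=\{y: S(y)\cap B(x,\delta)\ne\emptyset\}$ is open by lower semicontinuity, and $\{W_x\}_{x\in X}$ covers $Y$ (given $y$, pick $x\in S(y)$; then $d(x,S(y))=0<\delta$, so $y\in W_x$). Since $Y$ is paracompact, choose a locally finite partition of unity $\{\lambda_x\}_{x\in X}$ with $\supp\lambda_x\subset W_x$. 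For $y\in Y$ the set $A(y)=\{x\in X:\lambda_x(y)>0\}$ is finite and nonempty, and $\lambda_x(y)>0$ forces $y\in W_x$, i.e. $x\in N_\delta(S(y))$, the open $\delta$-neighbourhood of $S(y)$. By the defining property of a metric $l.c.$-space, $N_\delta(S(y))$ is again an $F$-set, so from $A(y)\subset N_\delta(S(y))$ we get $F(A(y))\subset N_\delta(S(y))$. Hence it suffices to construct a continuous $f\colon Y\to X$ with $f(y)\in F(A(y))$ for every $y$, for then $d(f(y),S(y))<\delta$.

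The construction of this $f$ is the only point that genuinely departs from Michael's argument, and is the main obstacle: one must replace the algebraic formation of a convex combination by a \emph{continuous} choice of a point of $F(A(y))$, which is possible precisely because the sets $F(A)$ are contractible and increase with $A$. Let $\mathcal{N}$ be the nerve of the partition of unity $\{\lambda_x\}$ (a vertex $v_x$ for each $x$ with $\lambda_x\not\equiv0$, and a simplex $\langle v_{x_0},\dots,v_{x_k}\rangle$ whenever $\bigcap_i\supp\lambda_{x_i}\ne\emptyset$), and let $\kappa\colon Y\to|\mathcal{N}|$ be the canonical map $\kappa(y)=\sum_x\lambda_x(y)v_x$, which is continuous by local finiteness. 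Define $h\colon|\mathcal{N}|\to X$ by induction over skeleta: send $v_x$ to a point of $F(\{x\})$ (nonempty, being contractible); and, having defined $h$ on the $(k-1)$-skeleton, extend it over each $k$-simplex $\sigma=\langle v_{x_0},\dots,v_{x_k}\rangle$ by observing that, by the inductive hypothesis together with the monotonicity $F(A)\subset F(A')$ for $A\subset A'$, $h$ already carries $\partial\sigma$ into the contractible set $F(\{x_0,\dots,x_k\})$, so that the map $\partial\sigma\cong S^{k-1}\to F(\{x_0,\dots,x_k\})$ extends over $\sigma\cong D^k$ with image still in $F(\{x_0,\dots,x_k\})$. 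Then $f=h\circ\kappa$ is continuous (locally $\kappa$ maps into a finite subcomplex on which $h$ is continuous), and since $\kappa(y)$ lies in the simplex spanned by $\{v_x:\lambda_x(y)>0\}$, we have $f(y)\in F(A(y))$. This proves the approximation statement.

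Granting it, the iteration is immediate. Apply the approximation statement to $S=T$ with $\delta=\varepsilon_0$ to obtain $f_0$ (the values $T(y)$ are in particular $F$-sets). Given $f_n$, put $T_n(y)=T(y)\cap B(f_n(y),\varepsilon_n)$; this is nonempty because $d(f_n(y),T(y))<\varepsilon_n$, it is an $F$-set since it is an intersection of the two $F$-sets $T(y)$ and the open ball $B(f_n(y),\varepsilon_n)$ (an intersection of $F$-sets is an $F$-set, directly from the definition), and $T_n$ is lower semicontinuous by the standard fact that $y\mapsto T(y)\cap B(g(y),r)$ is lower semicontinuous whenever $T$ is lower semicontinuous, $g$ is continuous, and the intersection is everywhere nonempty. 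Apply the approximation statement to $S=T_n$ with $\delta=\varepsilon_{n+1}$ to obtain $f_{n+1}$ with $d(f_{n+1}(y),T_n(y))<\varepsilon_{n+1}$; then $d(f_{n+1}(y),T(y))<\varepsilon_{n+1}$ since $T_n(y)\subset T(y)$, and $d(f_{n+1}(y),f_n(y))<\varepsilon_{n+1}+\varepsilon_n$ since $T_n(y)\subset B(f_n(y),\varepsilon_n)$ — exactly the two inductive properties required. The uniform limit $f$ of the first paragraph is then the sought continuous selection of $T$.
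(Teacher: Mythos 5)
The paper does not actually prove this theorem --- it is quoted from Horvath [Ho1] --- and your argument is essentially the proof given there: a Michael-type iteration of $\varepsilon$-selections in which convex combinations are replaced by a nerve map into the sets $F(A)$, built skeleton by skeleton from the contractibility and monotonicity of the $c$-structure. All steps check out (in particular, intersections of $F$-sets are $F$-sets, and the metric $l.c.$ hypothesis supplies the $F$-set property of open balls and of open $\delta$-neighbourhoods exactly where you invoke it), so the proposal is correct and follows the same route as the cited proof.
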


\section{Spaces of idempotent measures}\label{s:sim}

In the sequel, all maps will be
assumed to be continuous.
Let $X$ be a compact Hausdorff space.  
We shall
denote the
Banach space of continuous functions on $X$ endowed with the
$\sup$-norm by $C(X)$. 
For any $c\in\mathbb R$ we shall denote  the
constant function on $X$ taking the value $c$ 
by $c_X$. We shall denote
the weight of a topological space $X$ by $w(X)$.

Let $\R=\mathbb R\cup\{-\infty\}$ be
endowed with the metric
$\varrho$ defined by $\varrho(x,y)=|e^x-e^y|$. Let also
$\R^n=(\R)^n$.
Following the notation of idempotent mathematics (see
e.g.,
\cite{MS}) we shall
denote by $\odot\colon \mathbb R\times C(X)\to C(X)$
the map acting by $(\lambda,\varphi)\mapsto \lambda_X+\varphi$,
and by $\oplus\colon C(X)\times C(X)\to C(X)$ the map acting by
$(\varphi,\psi)\mapsto \max\{\varphi,\psi\}$.
For each $c\in\mathbb R$ by $c_X$ we shall
denote the constant function
from $C(X)$ defined by the formula $c_X(x)=c$ for each $x\in X$.
\begin{defin}\label{d:1} A functional $\mu\colon C(X)\to\mathbb R$ is called an {\em
idempotent probability measure} (a {\em Maslov measure}) if
\begin{enumerate}
\item $\mu(c_X)=c$;
\item $\mu(c\odot\varphi)=c\odot\mu(\varphi)$;
\item $\mu(\varphi\oplus\psi)=\mu(\varphi)\oplus\mu(\psi),$
\end{enumerate}
for every $\varphi,\psi\in C(X)$.
\end{defin}

The number $\mu(\varphi)$ is the {\em Maslov integral} of
$\varphi\in C(X)$ with respect to $\mu$.
Let $I(X)$ denote the set of all idempotent probability measures
on $X$. We endow $I(X)$ with the weak* topology. A basis of this
topology is formed by the sets
$$\langle\mu; \varphi_1,\dots,\varphi_n; \varepsilon\rangle=\{\nu\in I(X)\mid
|\mu(\varphi_i)-\nu(\varphi_i)|<\varepsilon,\ i=1,\dots,n\},$$
where $\mu\in I(X)$, $\varphi_i\in C(X)$, $i=1,\dots,n$,  and
$\varepsilon>0$.

The following is an example of an idempotent probability measure.
Let $x_1,\dots,x_n\in X$ and $\lambda_1,\dots,\lambda_n\in\R$ be
numbers such that $\max\{\lambda_1,\dots,\lambda_n\}=0$. Define
$\mu\colon C(X)\to\mathbb R$ as follows:
$\mu(\varphi)=\max\{\varphi(x_i)+\lambda_i\mid i=1,\dots,n\}$.  As
usual, for every $x\in X$, we denote by $\delta_x$ (or
$\delta(x)$) the functional on $C(X)$ defined as follows:
$\delta_x(\varphi)=\varphi(x)$, $\varphi\in C(X)$ (the Dirac
probability measure concentrated at $x$). Then one can write
$\mu=\oplus_{i=1}^n\lambda_i\odot\delta_{x_i}$.

Given a map $f\colon X\to Y$ of compact Hausdorff spaces, the map
$I(f)\colon I(X)\to I(Y)$ is defined by the formula
$I(f)(\mu)(\varphi)=\mu(\varphi f)$, for every $\varphi\in C(Y)$.
That $I(f)$ is continuous and that $I$ is a covariant functor
acting in the category $\comp$ is proved in \cite{Z}. Note that,
if $\mu=\oplus_{i=1}^n\lambda_i\odot\delta_{x_i}\in I(X)$, then
$I(f)(\mu)=\oplus_{i=1}^n\lambda_i\odot\delta_{f(x_i)}\in I(Y)$.

\subsection{Milyutin maps}

The following result was
proved in \cite{Z}.
\begin{thm}\label{t:mil} Let $X$ be a compact metrizable space. Then there
exists a zero-dimensional compact metrizable space $X$ and a
continuous map $f\colon X\to Y$ for which there exists a
continuous map $s\colon Y\to I(X)$ such that
$\mathrm{supp}(y)\subset f^{-1}(y)$, for every $y\in Y$.
\end{thm}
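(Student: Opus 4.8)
We are after what is essentially the idempotent-measure analogue of the classical Milyutin map theorem, and the plan is to obtain it by transferring the usual tree-of-covers construction to the max-plus setting: convex combinations are replaced by the operations $\oplus$ and $\odot$, and ``conservation of total mass'' by the rule that \emph{the weight of a node is the maximum of the weights of its children}. Write $X$ for the given compact metric space and $Z$ for the zero-dimensional space to be built; I must produce a continuous surjection $f\colon Z\to X$ and a continuous $s\colon X\to I(Z)$ with $\supp s(x)\subset f^{-1}(x)$ for all $x\in X$.

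First I would construct, by induction on $n$, finite nonempty open covers $\mathcal U_n=\{U^n_t\mid t\in T_n\}$ of $X$ with $\operatorname{mesh}\mathcal U_n\to 0$, surjective ``parent'' maps $p^{n+1}_n\colon T_{n+1}\to T_n$ with $U^{n+1}_{t'}\subset U^n_{p^{n+1}_n(t')}$, and continuous weights $\lambda^n_t\colon X\to\R$ satisfying: (i) $\{x\mid\lambda^n_t(x)>-\infty\}\subset U^n_t$; (ii) $\max_{t\in T_n}\lambda^n_t\equiv 0$; (iii) $\max\{\lambda^{n+1}_{t'}\mid p^{n+1}_n(t')=t\}=\lambda^n_t$ for all $t\in T_n$. (Given (iii), condition (ii) at level $n+1$ follows from (ii) at level $n$.) The inductive step refines $\mathcal U_n$ --- via a star-refinement, as in the classical construction --- extends the parent map, and then \emph{splits} each $\lambda^n_t$ over the children of $t$: pairing the refinement with a suitably subordinate family of partition-of-unity-type functions and passing to logarithms, one arranges (i)--(iii).

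Then I would assemble the data. Take $Z=\varprojlim(T_n,p^{n+1}_n)$ with each $T_n$ finite discrete, so $Z$ is a zero-dimensional compact metric space, with projections $p_n\colon Z\to T_n$; define $f\colon Z\to X$ by sending a thread $(t_n)$ to the only point of $\bigcap_n\overline{U^n_{t_n}}$ --- a singleton since $\overline{U^{n+1}_{t_{n+1}}}\subset\overline{U^n_{t_n}}$ and $\diam\overline{U^n_{t_n}}\to 0$, and nonempty since these are nonempty compacta. Then $f$ is continuous and surjective: for $x\in X$ the sets $\{(t_k)\in Z\mid x\in U^n_{t_n}\}$ decrease, are nonempty and closed, and $f$ carries any point of their intersection to $x$. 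Put $s_n(x)=\bigoplus_{t\in T_n}\lambda^n_t(x)\odot\delta_t\in I(T_n)$; by (ii) this is a genuine idempotent measure (of the form exhibited after Definition~\ref{d:1}), each $s_n\colon X\to I(T_n)$ is continuous as $T_n$ is finite, and a short computation identifies (iii) with the relation $I(p^{n+1}_n)\circ s_{n+1}=s_n$. Hence $s:=\varprojlim s_n$ is a well-defined continuous map from $X$ to $\varprojlim I(T_n)=I(Z)$, using that $I$ preserves inverse limits of compacta (cf.\ \cite{Z}). Finally $I(p_n)(s(x))=s_n(x)$, and since the support of a push-forward is the image of the support, $\supp s(x)\subset p_n^{-1}(\{t\mid\lambda^n_t(x)>-\infty\})$ for every $n$; if $z=(z_n)\in\supp s(x)$ then $\lambda^n_{z_n}(x)>-\infty$, so by (i) $x\in U^n_{z_n}\subset\overline{U^n_{z_n}}$, for every $n$, whence $x\in\bigcap_n\overline{U^n_{z_n}}=\{f(z)\}$; thus $\supp s(x)\subset f^{-1}(x)$.

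The hard part will be the inductive construction of the second paragraph: arranging that (i), (ii) and (iii) hold at once, and in particular maintaining the ``maximum'' compatibility (iii) at every level --- this is what glues the finite-level maps $s_n$ into one map into $I(Z)$, and it is the substantive difference from the probabilistic case, where one conserves total mass instead of preserving the pointwise maximum. This is precisely where the classical construction invokes star-refinements together with a carefully matched partition of unity, and the idempotent version requires the analogous, somewhat delicate, bookkeeping. Everything else --- zero-dimensionality of $Z$, continuity and surjectivity of $f$, continuity of $s$, and the support estimate --- is then routine.
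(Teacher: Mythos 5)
The paper does not actually prove Theorem~\ref{t:mil}; it is quoted from the preprint \cite{Z}, so there is no in-text argument to compare yours against. (Note also that the statement as printed has typos --- the zero-dimensional space should get a fresh name and ``$\mathrm{supp}(y)$'' should read ``$\mathrm{supp}(s(y))$''; you read through these correctly.) Your construction is the natural max-plus transplant of the classical tree-of-covers proof of Milyutin's theorem, and it is sound: the assembled data (inverse limit of finite discrete sets, continuity of each $s_n$, the identity $I(p^{n+1}_n)\circ s_{n+1}=s_n$ via (iii), continuity of the functor $I$ on inverse limits from \cite{Z}, and the support estimate via $\supp I(p_n)(\mu)=p_n(\supp\mu)$) all check out, including the slightly delicate point that $x\mapsto\max_t(\varphi(t)+\lambda^n_t(x))$ is continuous even where some $\lambda^n_t$ hits $-\infty$, because condition (ii) bounds the maximum below.

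The one step you defer --- the inductive splitting (iii) --- is genuinely routine, but let me record how to discharge it so the deferral is not a gap. Strengthen (i) to $\overline{\{\lambda^n_t>-\infty\}}\subset U^n_t$. Choose a finite open refinement $\mathcal V$ of $\mathcal U_n$ of small mesh and index the next level by \emph{pairs}, $T_{n+1}=\{(V,t)\mid V\in\mathcal V,\ V\subset U^n_t\}$, with $U^{n+1}_{(V,t)}=V$ and parent $t$; if the mesh of $\mathcal V$ is less than the distance from $\overline{\{\lambda^n_t>-\infty\}}$ to $X\setminus U^n_t$ for every $t$, the children of $t$ cover $\overline{\{\lambda^n_t>-\infty\}}$. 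Then pick Urysohn functions $\eta_{t'}\colon X\to[0,1]$ with $\{\eta_{t'}>0\}\subset U^{n+1}_{t'}$ whose maximum over the children of $t$ is bounded below by some $c>0$ on that compact set, put $\mu_{t'}=\log\min(\eta_{t'}/c,1)$ and $\lambda^{n+1}_{t'}=\lambda^n_t+\mu_{t'}$ (with $-\infty+a=-\infty$). Continuity into $\R$ amounts to continuity of $e^{\lambda^n_t}\cdot e^{\mu_{t'}}$, and (i)--(iii) follow; the only structural difference from the probabilistic case is exactly the one you identify, namely replacing ``sum over children equals parent'' by ``maximum over children equals parent,'' which if anything is easier to arrange since one needs the max of the $\mu_{t'}$ to equal $0$ only on $\{\lambda^n_t>-\infty\}$ rather than a genuine partition of unity.
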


A map $f$ satisfying the conditions stated above is called a {\em
Milyutin map} of idempotent measures.

\subsection{Map $\zeta_X$}

Given $M\in I^2(X)$, define the map $\zeta_X(M)\colon
C(X)\to\mathbb R$ as follows:
$\zeta_X(M)(\varphi)=M(\bar\varphi)$. Given $\varphi\in C(X)$,
define $\bar\varphi\colon I(X)\to\mathbb R$ as follows:
$\bar\varphi(\mu)=\mu(\varphi)$, $\mu\in I(X)$. It is proved in
\cite{Z} that the map $\zeta_X$ is continuous.

\section{Metrization}\label{s:metr}

Let $(X,d)$ be a compact metric space.
By $\mathrm{n-LIP}=\mathrm{n-LIP}(X,d)$ we denote the set of
Lipschitz functions with the Lipschitz constant $\le n$ from
$C(X)$.
Fix $n\in\mathbb N$. For every $\mu,\nu$, let $$\hat
d_n(\mu,\nu)=\sup\{|\mu(\varphi)-\nu(\varphi)|\mid \varphi\in
\mathrm{n-LIP}\}.$$
\begin{thm} The function $\hat d_n$ is a continuous pseudometric on $I(X)$.
\end{thm}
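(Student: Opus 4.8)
The plan is to verify the two defining properties of a continuous pseudometric separately: first that $\hat d_n$ is a pseudometric (nonnegativity, symmetry, triangle inequality, and finiteness), and then that it is continuous with respect to the weak* topology on $I(X)\times I(X)$.

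\medskip

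\textbf{Step 1: $\hat d_n$ is a well-defined pseudometric.}
Symmetry is immediate from the absolute value in the definition, and the triangle inequality follows from the pointwise estimate $|\mu(\varphi)-\eta(\varphi)|\le|\mu(\varphi)-\nu(\varphi)|+|\nu(\varphi)-\eta(\varphi)|$ followed by taking suprema over $\varphi\in\mathrm{n\text{-}LIP}$. The one genuine point is that $\hat d_n(\mu,\nu)<\infty$. For this, note that $\mathrm{n\text{-}LIP}$ is \emph{not} closed under adding constants in a bounded way, so one should first observe that, by property (2) of Definition~\ref{d:1}, $\mu(\varphi+c_X)-\nu(\varphi+c_X)=\mu(\varphi)-\nu(\varphi)$, so it suffices to take the supremum over those $\varphi\in\mathrm{n\text{-}LIP}$ with, say, $\min_X\varphi=0$. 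Such $\varphi$ satisfy $0\le\varphi\le n\cdot\diam(X,d)$, and then property (1) together with the monotonicity of $\mu$ (which follows from (3): if $\varphi\le\psi$ then $\varphi\oplus\psi=\psi$, hence $\mu(\psi)=\mu(\varphi)\oplus\mu(\psi)\ge\mu(\varphi)$) gives $0\le\mu(\varphi)\le n\cdot\diam(X,d)$, and the same for $\nu$; hence $\hat d_n(\mu,\nu)\le n\cdot\diam(X,d)$.

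\medskip

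\textbf{Step 2: continuity of $\hat d_n$.}
It suffices to fix $(\mu_0,\nu_0)\in I(X)\times I(X)$ and $\eps>0$ and produce a weak* neighbourhood on which $\hat d_n$ varies by less than $\eps$. The family $\Phi=\{\varphi\in\mathrm{n\text{-}LIP}:\min_X\varphi=0\}$ is equicontinuous and uniformly bounded by Step~1, hence relatively compact in $C(X)$ by the Arzel\`a--Ascoli theorem; choose a finite $(\eps/4)$-net $\varphi_1,\dots,\varphi_k$ for $\Phi$ in the sup-norm. For any $\mu\in I(X)$ and any $\varphi\in\Phi$, pick $\varphi_j$ with $\|\varphi-\varphi_j\|<\eps/4$; since property (2) together with monotonicity gives the (nonexpansiveness-type) estimate $|\mu(\varphi)-\mu(\varphi_j)|\le\|\varphi-\varphi_j\|$, one controls $\mu(\varphi)$ by $\mu(\varphi_j)$ up to $\eps/4$. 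Consequently, on the basic neighbourhood $\langle\mu_0;\varphi_1,\dots,\varphi_k;\eps/4\rangle\times\langle\nu_0;\varphi_1,\dots,\varphi_k;\eps/4\rangle$ one estimates, for every $\varphi\in\Phi$,
\[
|\mu(\varphi)-\nu(\varphi)|\le|\mu_0(\varphi)-\nu_0(\varphi)|+\eps,
\]
and symmetrically with the roles of $(\mu,\nu)$ and $(\mu_0,\nu_0)$ exchanged; taking suprema over $\Phi$ yields $|\hat d_n(\mu,\nu)-\hat d_n(\mu_0,\nu_0)|\le\eps$, which is the required continuity.

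\medskip

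The main obstacle is the uniform control of the Maslov integral under sup-norm perturbations of the test function, i.e.\ the estimate $|\mu(\varphi)-\mu(\psi)|\le\|\varphi-\psi\|$; once this ``$1$-Lipschitz in $\varphi$'' property of each $\mu\in I(X)$ is established (again from properties (1)--(3): $\varphi\le\psi+\|\varphi-\psi\|_X$ gives $\mu(\varphi)\le\mu(\psi)+\|\varphi-\psi\|$ and vice versa), both the finiteness in Step~1 and the Arzel\`a--Ascoli reduction in Step~2 go through routinely.
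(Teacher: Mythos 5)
Your proof is correct, and it follows the same basic strategy as the paper (verify the pseudometric axioms, then use Arzel\`a--Ascoli compactness of the normalized $n$-Lipschitz functions to get continuity), but your execution of the continuity step is genuinely different and in fact tighter. The paper argues by contradiction: it takes $\mu_i\to\mu$ with $\hat d_n(\mu_i,\mu)\ge c'$, extracts witnesses $\varphi_i$ normalized by $\varphi_i(x_0)=0$, passes to a limit point $\varphi$ of $(\varphi_i)$, and asserts $|\mu_i(\varphi)-\mu(\varphi)|\ge c$ to contradict weak* convergence. That last assertion silently requires exactly the estimate you isolate as the crux, namely that each $\mu\in I(X)$ is $1$-Lipschitz as a functional, $|\mu(\varphi)-\mu(\psi)|\le\|\varphi-\psi\|$ (this is Radul's lemma on order-preserving weakly additive functionals, which you rederive from properties (1)--(3)); without it one cannot transfer the lower bound from $\varphi_i$ to $\varphi$, and even with it one only gets $\ge c-2\|\varphi_i-\varphi\|$ along a subsequence. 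Your direct finite-$\eps$-net argument avoids sequences and contradiction altogether, makes the nonexpansiveness explicit, and proves joint continuity of $\hat d_n$ on $I(X)\times I(X)$ rather than only sequential continuity in one variable. The only blemish is bookkeeping: with an $(\eps/4)$-net and $(\eps/4)$-neighbourhoods your chain of triangle inequalities yields $|\hat d_n(\mu,\nu)-\hat d_n(\mu_0,\nu_0)|\le 3\eps/2$ rather than $\eps$ (there are six error terms of size $\eps/4$); replacing $\eps/4$ by $\eps/8$ fixes this and nothing else changes.
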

\begin{proof} We first remark that $\hat d_n$ is well-defined.
Indeed, $\sup\varphi-\inf\varphi\le n\diam X$, for every
$\varphi\in \mathrm{n-LIP}$, whence
$|\mu(\varphi)-\nu(\varphi)|\le2n\diam X$.

Obviously, $\hat d_n(\mu,\mu)=0$ and $\hat d_n(\mu,\nu)=\hat
d_n(\nu,\mu)$, for every $\mu,\nu\in I(X)$.

We are going to prove that $\hat d$ satisfies the triangle
inequality. Since, for every $\varphi\in \mathrm{n-LIP}\}$ and
$\mu,\nu,\tau\in I(X)$, $$\hat d(\mu,\nu)\ge
|\mu(\varphi)-\nu(\varphi)|,\ \hat d(\nu,\tau)\ge
|\nu(\varphi)-\tau(\varphi)|,$$ we have $$\hat d_n(\mu,\nu)+\hat
d(\nu,\tau)\ge|\mu(\varphi)-\nu(\varphi)|+
|\nu(\varphi)-\tau(\varphi)|\ge |\mu(\varphi)-\tau(\varphi)|,$$
whence, passing to $\sup$ in the right-hand side, we obtain $\hat
d_n(\mu,\nu)+\hat d(\nu,\tau)\ge\hat d(\mu,\tau)$.

Now, we prove that $\hat d$ is continuous. Suppose to
the contrary.
Then one can find a sequence $(\mu_i)_{i=1}^\infty$  in $I(X)$
such that $\lim_{i\to\infty}\mu_i=\mu\in I(X)$ and $\hat
d(\mu_i,\mu)\ge c'$, for some $c'>0$. Then there exist
$\varphi_i\in\mathrm{n-LIP}$, $i\in \mathbb N$,  such that
$|\mu_i(\varphi_i)-\mu(\varphi_i)|\ge c$, for some $c>0$. Since
the functionals in $I(X)$ are weakly additive, without loss of
generality, one may assume that $\varphi_i(x_0)=0$, for some base
point $x_0\in X$, $i\in \mathbb N$. By the Arzela-Ascoli theorem,
there exists a limit point $\varphi\in\mathrm{n-LIP} $ of the
sequence  $(\varphi_i)_{i=1}^\infty$. We have
$|\mu_i(\varphi)-\mu(\varphi)|\ge c$, which contradicts to the
fact that $(\mu_i)_{i=1}^\infty$ converges to $\mu$.
\end{proof}
\begin{rem} Simple examples demonstrate that $\hat d$ cannot be a
metric whenever $X$  consists of more than one point.
\end{rem}

\begin{prop} The family of pseudometrics $\hat d_n$, $n\in\mathbb
N$, separates the points in $I(X)$.
\end{prop}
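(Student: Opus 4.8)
The plan is to show that if $\mu,\nu\in I(X)$ are distinct, then there is some $n$ and some $\varphi\in\mathrm{n\text{-}LIP}$ with $\mu(\varphi)\neq\nu(\varphi)$; this immediately gives $\hat d_n(\mu,\nu)>0$. Since $\mu\neq\nu$, by definition there exists some $\psi\in C(X)$ with $\mu(\psi)\neq\nu(\psi)$. The only obstacle is that $\psi$ need not be Lipschitz, so the first step is to approximate: by the Stone--Weierstrass theorem (or directly, since $(X,d)$ is compact metric) the union $\bigcup_n\mathrm{n\text{-}LIP}(X,d)$ of all Lipschitz functions is dense in $C(X)$ in the sup-norm. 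Pick $\varphi$ Lipschitz with $\norm{\psi-\varphi}<\frac13|\mu(\psi)-\nu(\psi)|$, say with Lipschitz constant $\le n$.

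The second step is to control how much the values $\mu(\psi)$ and $\mu(\varphi)$ can differ given that $\psi$ and $\varphi$ are close. Here I use the elementary fact, provable directly from Definition~\ref{d:1}, that every $\mu\in I(X)$ is monotone and nonexpansive with respect to the sup-norm: if $\varphi\le\psi$ pointwise then $\psi=\varphi\oplus\psi$, so $\mu(\psi)=\mu(\varphi)\oplus\mu(\psi)\ge\mu(\varphi)$; and combining this with property~(2), for any $\varphi,\psi\in C(X)$ with $\norm{\varphi-\psi}\le\eps$ we have $\varphi\le\psi+\eps_X=\eps\odot\psi$, hence $\mu(\varphi)\le\eps\odot\mu(\psi)=\mu(\psi)+\eps$, and symmetrically, so $|\mu(\varphi)-\mu(\psi)|\le\norm{\varphi-\psi}$. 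Thus $|\mu(\psi)-\mu(\varphi)|<\frac13|\mu(\psi)-\nu(\psi)|$ and likewise for $\nu$.

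The third step is just the triangle inequality:
$$|\mu(\varphi)-\nu(\varphi)|\ge|\mu(\psi)-\nu(\psi)|-|\mu(\psi)-\mu(\varphi)|-|\nu(\psi)-\nu(\varphi)|>|\mu(\psi)-\nu(\psi)|-\tfrac23|\mu(\psi)-\nu(\psi)|>0.$$
Since $\varphi\in\mathrm{n\text{-}LIP}$, this yields $\hat d_n(\mu,\nu)\ge|\mu(\varphi)-\nu(\varphi)|>0$, so the family $\{\hat d_n\}_{n\in\mathbb N}$ separates points. The main (though still routine) point to get right is the nonexpansiveness estimate in the second step, since it is what lets the Lipschitz approximation do its job; everything else is formal.
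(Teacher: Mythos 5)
Your proof is correct and follows essentially the same route as the paper's: approximate a separating function $\psi\in C(X)$ by a Lipschitz function and transfer the separation via a $c/3$-type estimate. The only difference is that you spell out the monotonicity/nonexpansiveness of idempotent measures, which the paper leaves implicit with a ``clearly''.
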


\begin{proof} Let $\mu,\nu\in I(X)$, $\mu\neq\nu$. There exists
$\varphi\in C(X)$ such that $|\mu(\varphi)-\nu(\varphi)|>c$, for
some $c>0$. There exists $\psi\in \mathrm{n-LIP}$, for some
$n\in\mathbb N$, such that $\|\varphi-\psi\|\le(c/3)$. Then,
clearly, $|\mu(\psi)-\nu(\psi)|\ge(c/3)$ and therefore $\hat
d_n(\mu,\nu)\ge(c/3)$.
\end{proof}

We let $\tilde d_n=(1/n)\hat d_n$.

\begin{prop} The map $\delta=\delta_X$, $x\mapsto\delta_x\colon
(X,d)\to(I(X),\tilde d_n)$, is an isometric embedding for every
$n\in\mathbb N$.
\end{prop}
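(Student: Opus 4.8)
The plan is to show that $\tilde d_n(\delta_x,\delta_y)=d(x,y)$ for all $x,y\in X$, which immediately gives that $\delta_X$ is an isometric embedding (injectivity is automatic once the distances are preserved). Since $\tilde d_n=(1/n)\hat d_n$, this is equivalent to proving $\hat d_n(\delta_x,\delta_y)=n\,d(x,y)$. Recall that $\delta_x(\varphi)=\varphi(x)$, so $|\delta_x(\varphi)-\delta_y(\varphi)|=|\varphi(x)-\varphi(y)|$, and hence $\hat d_n(\delta_x,\delta_y)=\sup\{|\varphi(x)-\varphi(y)|\mid \varphi\in\mathrm{n\text{-}LIP}(X,d)\}$.

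First I would establish the upper bound $\hat d_n(\delta_x,\delta_y)\le n\,d(x,y)$: this is immediate, since any $\varphi\in\mathrm{n\text{-}LIP}$ satisfies $|\varphi(x)-\varphi(y)|\le n\,d(x,y)$ by the very definition of having Lipschitz constant $\le n$. For the reverse inequality, I would exhibit a single function in $\mathrm{n\text{-}LIP}$ that achieves (or approaches) the value $n\,d(x,y)$. The natural candidate is $\varphi(z)=n\,d(z,y)$, or, if one prefers a bounded representative, the truncation $\varphi(z)=n\min\{d(z,y),d(x,y)\}$. In either case $\varphi$ is Lipschitz with constant exactly $n$, since $z\mapsto d(z,y)$ is $1$-Lipschitz (triangle inequality) and the minimum of a $1$-Lipschitz function with a constant is again $1$-Lipschitz. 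Then $\varphi(y)=0$ and $\varphi(x)=n\,d(x,y)$, so $|\varphi(x)-\varphi(y)|=n\,d(x,y)$, giving $\hat d_n(\delta_x,\delta_y)\ge n\,d(x,y)$.

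Combining the two bounds yields $\hat d_n(\delta_x,\delta_y)=n\,d(x,y)$, hence $\tilde d_n(\delta_x,\delta_y)=d(x,y)$, so $\delta_X$ is an isometric embedding with respect to $\tilde d_n$ for every $n$. There is essentially no obstacle here: the only point requiring a moment's care is checking that the chosen test function genuinely lies in $\mathrm{n\text{-}LIP}$ (i.e. that it is continuous and has Lipschitz constant $\le n$), and, if one uses the unbounded version $n\,d(\cdot,y)$, noting that $X$ is compact so it is automatically bounded and lies in $C(X)$. Both verifications are routine consequences of the triangle inequality for $d$.
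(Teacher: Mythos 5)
Your proof is correct and follows essentially the same route as the paper: the upper bound from the Lipschitz condition, and the lower bound via the test function $z\mapsto n\,d(z,y)$ (the paper uses $\varphi_x(z)=n\,d(x,z)$, which is the same choice up to swapping the roles of $x$ and $y$). Nothing further is needed.
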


\begin{proof} Let $x,y\in X$ and $\varphi\in \mathrm{n-LIP}$. Then
$|\delta_x(\varphi)-\delta_y(\varphi)|\le n d(x,y)$, therefore
$\hat d_n(\delta_x,\delta_y)\le n d(x,y)$. Thus $\tilde
d_n(\delta_x,\delta_y)\le  d(x,y)$.

On the other hand, define $\varphi_x\in\mathrm{n-LIP}$ by the
formula $\varphi_x(z)=nd(x,z)$, $z\in X$. Then
$|\delta_x(\varphi_x)-\delta_y(\varphi_x)|=nd(x,y)$ and we are
done.
\end{proof}

\begin{prop} Let $f\colon (X,d)\to (Y,\varrho)$ be a nonexpanding
map of compact metric spaces. Then the map $I(f)\colon (I(X),\hat
d_n)\to (I(Y),\hat\varrho_n)$ is also nonexpanding, for every
$n\in\mathbb N$.
\end{prop}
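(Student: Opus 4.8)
The plan is to reduce everything to the fact that, for a nonexpanding $f\colon(X,d)\to(Y,\varrho)$, precomposition with $f$ carries $\mathrm{n\text{-}LIP}(Y,\varrho)$ into $\mathrm{n\text{-}LIP}(X,d)$. First I would fix $n\in\mathbb N$ and $\mu,\nu\in I(X)$, and write $\mu'=I(f)(\mu)$, $\nu'=I(f)(\nu)$; by definition of $I(f)$ we have $\mu'(\psi)=\mu(\psi f)$ and $\nu'(\psi)=\nu(\psi f)$ for every $\psi\in C(Y)$. Hence
\[
|\mu'(\psi)-\nu'(\psi)|=|\mu(\psi f)-\nu(\psi f)|.
\]
So the whole statement comes down to controlling the Lipschitz constant of $\psi f$.

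The key step is the observation that if $\psi\in\mathrm{n\text{-}LIP}(Y,\varrho)$ and $f$ is nonexpanding, then for $x,x'\in X$,
\[
|\psi(f(x))-\psi(f(x'))|\le n\,\varrho(f(x),f(x'))\le n\,d(x,x'),
\]
so $\psi f\in\mathrm{n\text{-}LIP}(X,d)$. Therefore, taking the supremum over $\psi\in\mathrm{n\text{-}LIP}(Y,\varrho)$ in the displayed equality and using that each $\psi f$ lies in $\mathrm{n\text{-}LIP}(X,d)$, we get
\[
\hat\varrho_n\big(I(f)(\mu),I(f)(\nu)\big)=\sup_{\psi}|\mu(\psi f)-\nu(\psi f)|\le\sup_{\varphi\in\mathrm{n\text{-}LIP}(X,d)}|\mu(\varphi)-\nu(\varphi)|=\hat d_n(\mu,\nu),
\]
which is exactly the assertion that $I(f)$ is nonexpanding.

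There is essentially no obstacle here; the only thing to be careful about is that the supremum defining $\hat\varrho_n$ is taken over \emph{all} of $\mathrm{n\text{-}LIP}(Y,\varrho)$, whereas the computation only produces the functions of the form $\psi f$, which need not exhaust $\mathrm{n\text{-}LIP}(X,d)$ after composition. But this goes the right way: we are bounding $\hat\varrho_n(I(f)(\mu),I(f)(\nu))$ from above by a supremum over a (possibly smaller) family of test functions inside $\mathrm{n\text{-}LIP}(X,d)$, which is in turn dominated by the full supremum $\hat d_n(\mu,\nu)$. One should also note in passing that $\psi f\in C(X)$ whenever $\psi\in C(Y)$ (continuity of $f$), so that $\mu(\psi f)$ and $\nu(\psi f)$ are defined; this is immediate. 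Finally, since $n$ was arbitrary, the conclusion holds for every $n\in\mathbb N$.
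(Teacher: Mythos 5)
Your proof is correct and follows essentially the same route as the paper's: the key observation in both is that $\psi f\in\mathrm{n\text{-}LIP}(X,d)$ whenever $\psi\in\mathrm{n\text{-}LIP}(Y,\varrho)$ and $f$ is nonexpanding, after which one passes to the supremum over $\psi$. Your remark about the direction of the inequality (the supremum on the left being taken over a possibly smaller family of test functions) is a worthwhile clarification of the step the paper describes only as ``passing to the limit.''
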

\begin{proof} Given $\varphi\in\mathrm{n-LIP}(Y)$, note that $\varphi
f\in\mathrm{n-LIP}(X)$ and, for any $\mu,\nu\in I(X)$, we have
$$|I(f)(\mu)(\varphi)-I(f)(\nu)(\varphi)|=|\mu(\varphi f)-\nu(\varphi f)|\le \hat d_n(\mu,\nu).$$
Passing to the limit in the left-hand side of the above formula,
we are done.
\end{proof}

Note that the above construction of $\hat d$ can be applied not
only to metrics but also to continuous pseudometrics.  Proceeding
in this way we obtain the iterations $(I(X),\tilde d_n)$,
$(I^2(X),\tilde{\tilde{d}}_{nm}=(\tilde{d}_n)\tilde{}_m)$,\dots

\begin{prop} For a metric space $(X,d)$, the map $\zeta_X\colon (I^2(X),\tilde{\tilde{d}}_{nn})
\to (I(X),\tilde d_n)$ is nonexpanding.
\end{prop}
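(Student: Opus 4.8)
The plan is to unwind the three definitions involved and reduce everything to one observation about the functions $\bar\varphi$. Recall that $\tilde d_n=(1/n)\hat d_n$, that $\zeta_X(M)(\varphi)=M(\bar\varphi)$ with $\bar\varphi(\mu)=\mu(\varphi)$, and that, applying the $\hat d_n$-construction to the continuous pseudometric $\tilde d_n$ on $I(X)$ (which is legitimate by the remark preceding the statement), one has
\[
\tilde{\tilde d}_{nn}(M,N)=\frac1n\sup\{|M(\Phi)-N(\Phi)|\mid \Phi\in\mathrm{n-LIP}(I(X),\tilde d_n)\},\quad M,N\in I^2(X).
\]
In particular,
\[
\tilde d_n(\zeta_X(M),\zeta_X(N))=\frac1n\sup\{|M(\bar\varphi)-N(\bar\varphi)|\mid\varphi\in\mathrm{n-LIP}(X,d)\}.
\]

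The key step is to check that $\varphi\in\mathrm{n-LIP}(X,d)$ implies $\bar\varphi\in\mathrm{n-LIP}(I(X),\tilde d_n)$. Continuity of $\bar\varphi$ is immediate from the definition of the weak* topology on $I(X)$, and for the Lipschitz bound we note that, for all $\mu,\nu\in I(X)$,
\[
|\bar\varphi(\mu)-\bar\varphi(\nu)|=|\mu(\varphi)-\nu(\varphi)|\le\hat d_n(\mu,\nu)=n\,\tilde d_n(\mu,\nu),
\]
the inequality being just the definition of $\hat d_n$ evaluated at the function $\varphi\in\mathrm{n-LIP}(X,d)$. Hence $\bar\varphi$ has Lipschitz constant $\le n$ with respect to $\tilde d_n$.

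Putting these together, each $\bar\varphi$ with $\varphi\in\mathrm{n-LIP}(X,d)$ is an admissible test function in the supremum defining $\tilde{\tilde d}_{nn}$, so $|M(\bar\varphi)-N(\bar\varphi)|\le n\,\tilde{\tilde d}_{nn}(M,N)$; taking the supremum over such $\varphi$ and dividing by $n$ gives $\tilde d_n(\zeta_X(M),\zeta_X(N))\le\tilde{\tilde d}_{nn}(M,N)$, which is the claim. I do not expect a genuine obstacle here: the only care needed is in tracking the normalization factors $1/n$ and in observing that $\mathrm{n-LIP}(I(X),\tilde d_n)$ is taken relative to a pseudometric rather than a metric, which is exactly the generality allowed by the remark that precedes the proposition.
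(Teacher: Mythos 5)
Your proof is correct and follows essentially the same route as the paper: show that $\varphi\in\mathrm{n\text{-}LIP}(X,d)$ implies $\bar\varphi\in\mathrm{n\text{-}LIP}(I(X),\tilde d_n)$, then use $\bar\varphi$ as a test function in the supremum defining $\tilde{\tilde d}_{nn}$ and take the supremum over $\varphi$. If anything, your version is the more careful one, since you track the normalization factors $1/n$ explicitly where the paper elides them.
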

\begin{proof} We first prove that, for any $\varphi\in\mathrm{n-LIP}(X,d)$, we have
$\bar\varphi\in \mathrm{n-LIP}(I(X),\hat d)$. Indeed, given
$\mu,\nu\in I(X)$, we see that $$n\tilde d(\mu,\nu)= \hat
d(\mu,\nu)\ge
|\mu(\varphi)-\nu(\varphi)|=|\bar\varphi(\mu)-|\bar\varphi(\nu)|$$
and we are done.

Suppose now that $M,N\in I^2(X)$, $\mu=\zeta_X(M)$,
$\nu=\zeta_X(N)$. Given $\varphi\in\mathrm{n-LIP}(X,d)$, we obtain
$$|\mu(\varphi)-\nu(\varphi)|=|M(\bar\varphi)-N(\bar\varphi)|\le \tilde{\tilde{d}}_{nn}(M,N).$$
Passing to the limit in the left-hand side, we are done.

\end{proof}

\begin{rem} Using the results on existence of the pseudometrics
$\tilde d_n$, one can define the spaces of idempotent probability
measures with compact support for metric and, more generally,
uniform spaces. Indeed, let $(X,d)$ be a metric space. We define
the set $I(X)$ to be the direct limit of the direct system
$\{I(A),I(\iota_{AB});\exp X\}$ (here, for $A,B\in \exp X$ with
$A\subset B$, we denote by $\iota_{AB}\colon A\to B$ the inclusion
map). For every $A\in\exp X$, we identify $I(A)$ with the
corresponding subset of $I(X)$ along the map $I(\iota_A)$, where
$\iota_A\colon A\to X$ is the limit inclusion map. For any $\mu\in
I(X)$, there exists a unique minimal $A\in\exp X$ such that
$\mu\in I(A)$. Then we say that $A$ is the {\em support} of $\mu$
and write $\mathrm{supp}(\mu)=A$.

Now, define a family of pseudometrics $\hat d_n$, $n\in\mathbb N$,
on $I(X)$ as follows. Given $\mu,\nu\in I(X)$, we let $$\hat
d_n(\mu,\nu)= \hat
d_n|((\mathrm{supp}(\mu)\cup\mathrm{supp}(\nu))\times(\mathrm{supp}(\mu)\cup\mathrm{supp}(\nu)))(\mu,\nu).$$
One can prove that, for any uniform space $(X,\mathcal U)$, if the
uniformity $\mathcal U$ is generated by a family $\{d^\alpha\mid
\alpha\in A\}$ of pseudometrics, then the family $\{\tilde
d^\alpha_n\mid \alpha\in A,\ n\in\mathbb N\}$ of pseudometrics on
$I(X)$ generates a uniformity on $I(X)$.
\end{rem}

Let $(X,d)$ be a compact metric space. We define a function
$\tilde d\colon I(X)\times I(X)\to \mathbb R$ as follows:
$$\tilde d(\mu,\nu)=\sum_{i=1}^{\infty}\frac{\tilde d_n(\mu,\nu)}{2^i}.$$
It follows from what was proved above that $\tilde d$ is a metric
on the space $I(X)$.

\section{Space of idempotent measures for metric
compacta}\label{s:mcomp}
 It is proved in \cite{Z} that the set
$I(X)$ is homeomorphic to the $(n-1)$-dimensional simplex for any
finite $X$ with $|X|=n$.

A set $A\subset I(X)$ is called {\em max-plus convex} if, for
every $\mu,\nu\in A$ and every $\alpha,\beta\in\R$ with
$\alpha\oplus\beta=0$, we have
$\alpha\odot\mu\oplus\beta\odot\nu\in A$.

\begin{lemma} Let $\mu_0\in I(X)$. The map $h\colon I(X)\times [-\infty,0]\to
I(X)$, $h(\mu,\lambda)=\mu\oplus(\lambda\odot\mu_0)$, is
continuous.
\end{lemma}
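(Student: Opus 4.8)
The plan is to reduce everything to the defining property of the weak$^*$ topology on $I(X)$. Recall that this topology is the initial (coarsest) topology with respect to the family of evaluation functionals $e_\varphi\colon I(X)\to\mathbb R$, $e_\varphi(\nu)=\nu(\varphi)$, $\varphi\in C(X)$; indeed, the basic neighbourhood $\langle\mu;\varphi_1,\dots,\varphi_n;\varepsilon\rangle$ is precisely $\bigcap_{i=1}^n e_{\varphi_i}^{-1}\big((\mu(\varphi_i)-\varepsilon,\mu(\varphi_i)+\varepsilon)\big)$. Consequently a map $g$ from any space into $I(X)$ is continuous if and only if $e_\varphi\circ g$ is continuous for every $\varphi\in C(X)$. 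Before invoking this, one should observe that $h$ indeed lands in $I(X)$: for $\lambda\le 0$ a direct check of the three axioms of Definition \ref{d:1} shows that the functional $\varphi\mapsto\max\{\mu(\varphi),\lambda+\mu_0(\varphi)\}$ is again an idempotent probability measure, the hypothesis $\lambda\le 0$ being exactly what is needed for axiom (1) (and for $\lambda=-\infty$ this functional coincides with $\mu$).

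Thus it suffices to show that, for each fixed $\varphi\in C(X)$, the function
$$(\mu,\lambda)\longmapsto e_\varphi\big(h(\mu,\lambda)\big)=\max\{\mu(\varphi),\,\lambda+\mu_0(\varphi)\}$$
is continuous on $I(X)\times[-\infty,0]$. I would write this as a composition of continuous maps: first $(\mu,\lambda)\mapsto(\mu(\varphi),\lambda)\in\mathbb R\times[-\infty,0]$, continuous because its first coordinate is $e_\varphi\circ\pr_1$ and its second is $\pr_2$; then $(s,\lambda)\mapsto(s,\lambda+\mu_0(\varphi))\in\mathbb R\times[-\infty,\infty)$, the second coordinate being a translation, i.e. a homeomorphism of $[-\infty,0]$ onto $[-\infty,\mu_0(\varphi)]$; and finally the map $\max\colon\mathbb R\times[-\infty,\infty)\to\mathbb R$. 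Since a composition of continuous maps is continuous, this proves $e_\varphi\circ h$ continuous for every $\varphi$, whence $h$ is continuous.

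The only point requiring a moment's care — and the only potential obstacle — is the continuity of $\max$ at boundary points of the form $(a,-\infty)$ with $a\in\mathbb R$: here $\max\{a,-\infty\}=a$, and if $(a_i,b_i)\to(a,-\infty)$ then eventually $b_i<a$, so $\max\{a_i,b_i\}=a_i\to a$; at all other points continuity of $\max$ is standard. In particular $\max\{\mu(\varphi),\lambda+\mu_0(\varphi)\}$ is always a genuine real number even when $\lambda+\mu_0(\varphi)=-\infty$, so $I(X)$ is the correct codomain. Everything else is routine.
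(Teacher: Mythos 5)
Your proof is correct and follows essentially the same route as the paper's: both arguments reduce continuity of $h$ to continuity of $(\mu,\lambda)\mapsto\max\{\mu(\varphi),\lambda+\mu_0(\varphi)\}$ for each fixed $\varphi\in C(X)$, the paper via a two-case analysis on subbasic neighbourhoods and you by factoring through the continuous extended $\max\colon\mathbb R\times[-\infty,\infty)\to\mathbb R$. Your version is in fact the tidier one --- it makes the well-definedness of $h$ (that $h(\mu,\lambda)$ satisfies the axioms of Definition \ref{d:1}, using $\lambda\le 0$) explicit, and it avoids the typographical slip in the paper's Case 2, where the interval for $\lambda'$ should read $(\lambda-\varepsilon,\lambda+\varepsilon)$.
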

\begin{proof} Let $(\mu,\lambda)\in I(X)\times [-\infty,0]$,
$\nu=h(\mu,\lambda)$, and $\langle\nu;\varphi;\varepsilon\rangle$
be a subbase neighborhood of $\nu$.

Case 1). $h(\mu,\lambda)=\mu(\varphi)$. Then
$\mu(\varphi)\ge\lambda+\mu_0(\varphi)$ and it is evident that,
for any $\mu'\in\langle\mu;\varphi;\varepsilon\rangle$ and
$\lambda'\in[-\infty,\lambda+\varepsilon)\cap \R$, we have
$h(\mu',\lambda')\in\langle\nu;\varphi;\varepsilon\rangle$.

Case 2). $h(\mu,\lambda)=\lambda+\mu_0(\varphi)$. Then necessarily
$\lambda>-\infty$. For every
$\mu'\in\langle\mu;\varphi;\varepsilon\rangle$ and
$\lambda'\in(\lambda+\varepsilon,\lambda+\varepsilon)\cap \R$, we
have $h(\mu',\lambda')\in\langle\nu;\varphi;\varepsilon\rangle$.
\end{proof}
\begin{lemma} Let $X$ be a compact metrizable space. Every max-plus convex subset in $I(X)$ is
contractible.
\end{lemma}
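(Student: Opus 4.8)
The plan is to fix a nonempty max-plus convex set $A \subset I(X)$ and contract it to a chosen point $\mu_0 \in A$ by a straight-line max-plus homotopy. The natural candidate is $H\colon A \times [0,1] \to A$ defined, after reparametrizing $[0,1]$ onto $[-\infty, 0]$ via some fixed homeomorphism $t \mapsto \lambda(t)$ with $\lambda(0)=-\infty$ and $\lambda(1)=0$, by
$$H(\mu, t) = \lambda(t)\odot\mu_0 \oplus \mu.$$
Since $\lambda(t)\oplus 0 = 0$ always holds for $\lambda(t)\le 0$, the defining property of max-plus convexity guarantees $H(\mu,t)\in A$ for all $\mu\in A$, $t\in[0,1]$; this is the only place convexity of $A$ is used. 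At $t=1$ we get $0\odot\mu_0\oplus\mu = \mu_0\oplus\mu$, which is not literally $\mu$, so the homotopy as written contracts $A$ not to $\mu_0$ but rather realizes a deformation — hence I would either (i) observe that contractibility only requires the endpoint map to be \emph{constant}, and note $H(\cdot,1)\equiv$ the map $\mu\mapsto\mu_0\oplus\mu$ is \emph{not} constant, so instead (ii) run the homotopy the other way: define $G(\mu,t) = \mu \oplus (\lambda(t)\odot\mu)$-type corrections are awkward, so the cleanest route is to use $H(\mu,t)=\lambda(t)\odot\mu_0\oplus\mu$ to deform $A$ onto the single point $\mu_0$ by first checking $H(\mu_0, t) = \lambda(t)\odot\mu_0\oplus\mu_0 = \mu_0$ (since $\lambda(t)\le 0$), and $H(\mu,0) = (-\infty)\odot\mu_0 \oplus \mu = \mu$, while at $t=1$, $H(\mu,1)=\mu_0\oplus\mu$. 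This still does not land on a point. The honest fix: contractibility of $A$ follows if we exhibit \emph{any} point $p$ and a homotopy from $\id_A$ to the constant map $p$; so I would instead show $A$ deformation retracts onto $\{\mu_0\}$ in two stages, or — simplest — replace the target point: the map $\mu \mapsto \mu_0\oplus\mu$ is itself homotopic to $\id_A$ via $H$, and separately $\mu_0\oplus\mu$ can be pushed to $\mu_0$ by the \emph{same} formula with roles reversed, i.e. $K(\nu,t)=\lambda(t)\odot\nu\oplus\mu_0$ sends $\nu\mapsto\mu_0$ at $t=0$ (as $(-\infty)\odot\nu\oplus\mu_0=\mu_0$) and $\nu\mapsto\nu\oplus\mu_0$ at $t=1$. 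Concatenating, $A$ is contractible to $\mu_0$.

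The key steps, in order, are: (1) fix $\mu_0\in A$ and the reparametrization $\lambda\colon[0,1]\to[-\infty,0]$, a homeomorphism; (2) define $H(\mu,t)=\lambda(t)\odot\mu_0\oplus\mu$ and $K(\nu,t)=\lambda(t)\odot\nu\oplus\mu_0$; (3) invoke max-plus convexity of $A$ to see both maps take values in $A$; (4) invoke the previous lemma (with $\mu_0$, resp.\ with variable first argument — note $K$ is handled by applying continuity of $(\mu,\lambda)\mapsto\mu\oplus(\lambda\odot\mu_0)$ after swapping, which still follows from joint continuity of $\odot$ and $\oplus$ on $I(X)$) to get continuity of $H$ and $K$; (5) check the boundary values: $H(\cdot,0)=\id_A$, $H(\mu,1)=\mu_0\oplus\mu=K(\mu,1)$, $K(\cdot,0)\equiv\mu_0$; (6) concatenate $H$ and $K$ into a single homotopy $A\times[0,1]\to A$ from $\id_A$ to the constant map at $\mu_0$, using the standard reparametrization of $[0,1]=[0,\tfrac12]\cup[\tfrac12,1]$.

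The main obstacle is step (4) for the map $K$: the earlier Lemma is stated only for the \emph{fixed} base point $\mu_0$, i.e.\ for $h(\mu,\lambda)=\mu\oplus(\lambda\odot\mu_0)$ where the second summand varies but the measure being scaled is constant. For $K(\nu,t)=\lambda(t)\odot\nu\oplus\mu_0$ the scaled measure is the \emph{variable} one, so I cannot cite the Lemma verbatim. However, continuity of $K$ still holds and should be proved directly by the same subbasic-neighborhood argument as in the Lemma: given $\langle K(\nu,t);\varphi;\varepsilon\rangle$, one splits into the case $K(\nu,t)(\varphi)=\mu_0(\varphi)$ (where a neighborhood of $(\nu,t)$ works because $\lambda(t')+\nu'(\varphi)$ stays below $\mu_0(\varphi)+\varepsilon$ for $t'$ near $t$ and $\nu'$ near $\nu$, using that $\nu'(\varphi)$ is bounded near $\nu$ and $\lambda$ is continuous) and the case $K(\nu,t)(\varphi)=\lambda(t)+\nu(\varphi)$ (where $\lambda(t)>-\infty$, and continuity of $(\nu,t)\mapsto\lambda(t)+\nu(\varphi)$ together with weak$^*$-continuity of $\nu\mapsto\nu(\varphi)$ finishes it). This is routine but must be spelled out; alternatively one notes that $\oplus\colon I(X)\times I(X)\to I(X)$ and $\odot\colon\R\times I(X)\to I(X)$ (with the metric $\varrho$ on $\R$) are jointly continuous — which is essentially the content of the preceding Lemma's proof technique — and then $K$ is a composite of continuous maps. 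I would phrase the final write-up to derive both $H$ and $K$ from this joint-continuity observation, thereby absorbing the obstacle into a single clean citation of the mechanism used to prove the previous Lemma.
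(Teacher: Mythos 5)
Your proof is correct, but it takes a genuinely different route from the paper's. The paper contracts $A$ in a single step to the special point $\max A$ (the pointwise supremum of $A$, which is again an idempotent measure), via $H(\mu,\lambda)=\mu\oplus(\lambda\odot\max A)$: since $\max A$ dominates every $\mu\in A$, the endpoint map $\mu\mapsto\mu\oplus\max A=\max A$ is automatically constant, and continuity is exactly the preceding lemma. You instead contract to an \emph{arbitrary} $\mu_0\in A$ by concatenating $H(\mu,t)=\lambda(t)\odot\mu_0\oplus\mu$ with the reversed $K(\mu,t)=\lambda(t)\odot\mu\oplus\mu_0$; the price is that $K$ is not literally an instance of the preceding lemma (the scaled measure is now the variable one), so you must rerun the subbasic-neighborhood continuity argument, which you correctly identify and sketch --- the key point being that $\nu(\varphi)$ ranges over the bounded set $[\min\varphi,\max\varphi]$, so $\lambda(t')+\nu'(\varphi)\to-\infty$ uniformly as $\lambda(t')\to-\infty$. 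What your version buys is worth noting: the paper's assertion that $\max A\in A$ ``because of max-plus convexity'' is only clear for finitely generated or closed max-plus convex sets (the definition of max-plus convexity yields only finite combinations, and one can write down a non-closed max-plus convex $A$ in $I(\{1,2\})$ whose supremum is not attained in $A$), whereas your two-stage homotopy never leaves $A$ for an arbitrary nonempty max-plus convex set and needs no such claim. So your argument is slightly longer but closes a genuine gap in the paper's proof as written.
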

\begin{proof} Let $A\subset I(X)$ be a nonempty max-plus convex subset in
$I(X)$. Note that $\max A\in A$, because of max-plus convexity of
$A$. Define the map $H\colon A\times \R\to A$ as follows:
$H(\mu,\lambda)=\mu\oplus(\lambda\odot \max A)$. Then
$H(\mu,-\infty)=\mu$ and $H(\mu,0)=\max A$. This shows that the
set $A$ is contractible.
\end{proof}
\begin{thm} The space $I(X)$ is homeomorphic to the Hilbert cube
for any infinite compact metrizable space $X$.
\end{thm}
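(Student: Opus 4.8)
The plan is to invoke Toruńczyk's characterization theorem, so I must verify that $I(X)$ is a compact metrizable AR satisfying the DAP. Compactness and metrizability come from \cite{Z} together with the metric $\tilde d$ constructed in Section~\ref{s:metr}. For the AR property, the natural route is through the notion of metric $l.c.$-space: I would exhibit a $c$-structure on $(I(X),\tilde d)$ by assigning to every finite subset $A\subset I(X)$ its max-plus convex hull $F(A)$, i.e.\ the smallest max-plus convex set containing $A$. Lemma (the one just stated) shows that such sets are contractible, and it is immediate that $F(A)\subset F(A')$ when $A\subset A'$, so this is indeed a $c$-structure. The substantive point is to check that with respect to $\tilde d$ (equivalently, each $\tilde d_n$) the open balls are $F$-sets and open $r$-neighborhoods of $F$-sets are $F$-sets; here I would use that each $\tilde d_n$ is built from evaluations against $n$-Lipschitz functions, that the operations $\odot$ and $\oplus$ interact affinely/monotonically with such evaluations (so that $\mu\mapsto\alpha\odot\mu\oplus\beta\odot\nu$ does not increase $\tilde d_n$-distances in the relevant sense), and hence a ball around any point — or around an $F$-set — is closed under forming max-plus convex combinations. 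Once $(I(X),\tilde d,F)$ is recognized as a (complete, since compact) metric $l.c.$-space, \cite{Ho1} gives that $I(X)$ is an AR.

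Next I would establish the DAP. The idea is to push two given maps $g_1,g_2\colon Z\to I(X)$ slightly apart using extra idempotent mass. Since $X$ is infinite, I can choose for each small $\varepsilon$ a point (or a short arc of points) $x_\varepsilon\in X$ whose $\delta$-image is $\varepsilon$-close to a fixed base measure, and perturb: for instance replace $g_1$ by $\mu\mapsto g_1(\mu)\oplus((-N)\odot\delta_{x})$ and $g_2$ by $\mu\mapsto g_2(\mu)\oplus((-N)\odot\delta_{y})$ with $x\neq y$ and $N$ large; by the Lemma on continuity of $h$, these are continuous and $\tilde d$-close to $g_1,g_2$. To force disjoint images one exploits the support structure: an idempotent measure records, via its restriction to Lipschitz functions, the pair $(x,$ level$)$ of its ``appended'' atoms, so adding a tiny but detectable atom at $x$ versus at $y$ separates the ranges. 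Concretely I expect to use the support/Milyutin machinery (Theorem~\ref{t:mil}) together with the metric $\hat d_n$ to certify that the perturbed images land in disjoint sets. Alternatively — and this is probably cleaner — one can use the standard fact that an AR satisfying the DAP is detected by checking it for maps of $Q$ into $I(X)$, and then use the self-similarity $I(X)\cong I(X)\times$(something) implicit in the map $\zeta_X$ and the Milyutin section $s\colon Y\to I(X)$ from Theorem~\ref{t:mil}, reducing DAP for $I(X)$ to DAP for the infinite product $I(\text{zero-dimensional compactum})$, which visibly has room to separate images.

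Assembling: $I(X)$ is a compact metric AR with the DAP, so by Toruńczyk's theorem $I(X)\cong Q$. The main obstacle I anticipate is the verification that the max-plus convex hull $c$-structure satisfies the $l.c.$ axioms relative to the specific pseudometrics $\tilde d_n$ — i.e.\ that $r$-neighborhoods of $F$-sets are again $F$-sets. This requires a genuine (if short) estimate showing that forming a max-plus combination $\alpha\odot\mu\oplus\beta\odot\nu$ of two measures lying within $r$ of an $F$-set produces a measure still within $r$ of that set; the key inequality is that for $\varphi\in n\text{-LIP}$, $|(\alpha\odot\mu\oplus\beta\odot\nu)(\varphi)-(\alpha\odot\mu'\oplus\beta\odot\nu')(\varphi)|\le\max\{|\mu(\varphi)-\mu'(\varphi)|,|\nu(\varphi)-\nu'(\varphi)|\}$, which follows from the elementary bound $|\max\{a,b\}-\max\{a',b'\}|\le\max\{|a-a'|,|b-b'|\}$ applied to $a=\alpha+\mu(\varphi)$, etc. Once that is in hand, everything else is an application of the cited theorems.
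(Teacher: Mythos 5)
Your treatment of the AR half is essentially the paper's own argument: the max-plus convex hull $c$-structure, contractibility of max-plus convex sets via the Lemma, and the verification that $\hat d_n$-balls are $F$-sets through the elementary bound $|\max\{a,b\}-\max\{a',b'\}|\le\max\{|a-a'|,|b-b'|\}$ applied to $\varphi\in\mathrm{n-LIP}$. That part is fine and matches the paper.

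The DAP half has a genuine gap. Your main device --- replacing $g_1$ by $\mu\mapsto g_1(\mu)\oplus((-N)\odot\delta_{x})$ and $g_2$ by $\mu\mapsto g_2(\mu)\oplus((-N)\odot\delta_{y})$ with $x\neq y$ --- does not produce disjoint images, precisely because $\oplus$ is idempotent: if $\mu$ already satisfies $\mu(\varphi)\ge\varphi(x)-N$ and $\mu(\varphi)\ge\varphi(y)-N$ for all $\varphi$ (e.g.\ $\mu=\delta_x\oplus\delta_y$, or any measure whose support contains both $x$ and $y$ with levels above $-N$), then both perturbations return $\mu$ unchanged, so the two ranges intersect. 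Unlike the probabilistic case, a ``tiny appended atom'' leaves no detectable trace on the functional when it is dominated, so there is no ``pair $(x,\text{level})$'' recorded that could separate the ranges. Your fallback remarks (``exploit the support structure,'' ``use the self-similarity implicit in $\zeta_X$'') gesture at the right ingredients but do not constitute an argument. The idea you are missing is the paper's contrast between \emph{finite} and \emph{full} support: one map, $g_1=I(fr)\circ\zeta_Y\circ I(s)$ built from a Milyutin map $f\colon Y\to X$ with $Y$ zero-dimensional and a retraction $r$ of $Y$ onto a finite subset close to $\mathrm{id}_Y$, lands in the set of finitely supported measures; the other, $g_2(\mu)=\mu\oplus\lambda\odot j_X(X)$ with $j_X(X)(\varphi)=\max\varphi$ and $\lambda$ close to $-\infty$, forces $\supp(g_2(\mu))=X$. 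Here the idempotency works \emph{for} you rather than against you ($\lambda\odot j_X(X)$ is never wholly absorbed on functions peaked away from $\supp\mu$), and since $X$ is infinite the two images are disjoint. Without some mechanism of this kind your DAP verification does not go through.
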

\begin{proof}

We first show that $I(X)$ is an AR-space. Fix a metric $d$ on $X$
that generates its topology. Define a $c$-structure on the space
$I(X)$ as follows.  To every nonempty finite subset
$A=\{\mu_1,\dots,\mu_n\}$ of $I(X)$ assign a  subspace
$$F(A)=\{\oplus_{i=1}^n\alpha_i\odot\mu_i\mid
\alpha_1,\dots,\alpha_n\in \R,\ \oplus_{i=1}^n\alpha_i=0\}.$$ It
is easy to verify that $F(A)$ is max-plus convex and therefore
contractible.

We are going to show that this $c$-structure is an
$l.c.$-structure. We shall prove
that every ball with
respect to the metric $\hat d$ in $I(X)$ is convex.

Let $\mu,\nu,\tau\in I(X)$, $\lambda\in\R$, and $\varphi\in
\mathrm{n-LIP}$. Then
$$|\mu(\varphi)-((\lambda\odot\nu)\oplus\tau)(\varphi)|\le\max\{|\mu(\varphi)-\nu(\varphi)|,
|\mu(\varphi)-\tau(\varphi)|\},$$ whence $$\hat d_n(\mu,
(\lambda\odot\nu)\oplus\tau)\le \max\{\hat d_n(\mu,\nu), \hat
d_n(\mu,\tau)\}$$ and therefore, for any $\varepsilon>0$, if $\hat
d(\mu,\nu)<\varepsilon$ and $\hat d(\mu,\tau)<\varepsilon$, then
$\hat d(\mu, (\lambda\odot\nu))<\varepsilon$.

We now
show that the space $I(X)$ satisfies DAP. Let $f\colon Y\to
X$ be a Milyutin map of a zero-dimensional compact metrizable
space $Y$. There exists a continuous map $s\colon X\to I(Y)$ such
that $I(f)s(x)=\delta_x$, for every $x\in X$. Let $r\colon Y\to
Y'$ be a retraction of $Y$ onto its finite subset $Y'$. Since $Y$
is zero-dimensional, we may choose $r$ as close to $\id_Y$ as we
wish. Define $g_1\colon I(X)\to I(X)$ as follows:
$$g_1(\mu)=I(fr)\zeta_YI(s)(\mu)=\zeta_YI^2(fr)I(s)(\mu).$$

Then $g_1(I(X))\subset I_\omega(X)$.

Recall that $j_X(X)\in I(X)$ acts as follows:
$j_X(X)(\varphi)=\max\varphi$, $\varphi\in C(X)$. Define
$g_2\colon I(X)\to I(X)$ by the formula
$g_2(\mu)=\mu\oplus\lambda\odot j_X(X)$, where  $\lambda\in
(-\infty,0]$ is fixed. If $\lambda$ is small enough, the map $g_2$
is close to $\id_{I(X)}$.

Note that $\supp(g_2(\mu))=X$, for every $\mu\in I(X)$, and
therefore $g_1(I(X))\cap g_2(I(X))=\emptyset$. By Toru\'nczyk's
theorem, $I(X)$ is homeomorphic to $Q$.
\end{proof}

\begin{rem} Note that, for max-plus convex sets, a version of the
Michael Selection Theorem is proved in \cite{Z1}.
\end{rem}

\section{Maps of spaces of idempotent measures}\label{s:maps}

\begin{thm}\label{t:soft} Let $p_1\colon X\times Y\to X$ denote the projection
onto the first factor, where $X$, $Y$ are compact metric spaces.
Then the map $I(p_1)\colon I(X\times Y)\to I(X)$ is soft.
\end{thm}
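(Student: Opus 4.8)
The plan is to verify the definition of softness directly, by solving the lifting problem \eqref{myeq} with $X$ replaced by $I(X\times Y)$, $Y$ by $I(X)$, and $f$ by $I(p_1)$. So suppose we are given a paracompact space $Z$, a closed subset $A\subset Z$, a map $\psi\colon Z\to I(X)$, and a map $\varphi\colon A\to I(X\times Y)$ with $I(p_1)\varphi=\psi|A$; we must produce $\Phi\colon Z\to I(X\times Y)$ extending $\varphi$ with $I(p_1)\Phi=\psi$. The key observation is that the fibre $I(p_1)^{-1}(\mu)$ over a point $\mu\in I(X)$ has an explicit description: an idempotent measure $M$ on $X\times Y$ lies over $\mu$ precisely when its ``marginal'' on $X$ is $\mu$, i.e.\ $M(\chi\circ p_1)=\mu(\chi)$ for all $\chi\in C(X)$. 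One checks easily that this fibre is a nonempty max-plus convex subset of $I(X\times Y)$ (nonempty because, e.g., $\mu$ can be pushed through a section $x\mapsto\delta_{(x,y_0)}$; convex because the marginal condition is preserved by the operations $\odot,\oplus$), hence contractible and in fact an $F$-set for the $l.c.$-structure on $I(X\times Y)$ used in the previous theorem.

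The strategy is then to apply the Michael-type selection theorem, Theorem~\ref{t:ho}, to the multivalued map on $Z$ sending $z$ to the fibre $I(p_1)^{-1}(\psi(z))\subset I(X\times Y)$, but modified so as to force the extension to agree with $\varphi$ on $A$. Concretely, I would define $T\colon Z\to I(X\times Y)$ by $T(z)=\{\varphi(z)\}$ for $z\in A$ and $T(z)=I(p_1)^{-1}(\psi(z))$ for $z\in Z\setminus A$. The values are nonempty closed $F$-sets (a singleton being trivially an $F$-set). The main point to check is lower semicontinuity of $T$: away from $A$ this follows from continuity of $\psi$ together with the openness/softness behaviour of the marginal map, and near a point $a\in A$ one uses that $\varphi(a)\in I(p_1)^{-1}(\psi(a))$ together with the fact that small perturbations of a measure in the fibre can be obtained by small perturbations of $\mu=\psi(a)$ — this is exactly where one needs that $I(p_1)$ restricted to fibres behaves continuously, which in turn rests on the max-plus convex structure. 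Having verified lower semicontinuity, Theorem~\ref{t:ho} (applied with the complete metric $\tilde d$ on $I(X\times Y)$ and its $l.c.$-structure $F$) yields a continuous selection $\Phi\colon Z\to I(X\times Y)$; by construction $\Phi|A=\varphi$ and $I(p_1)\Phi(z)\in\{\psi(z)\}$ for $z\notin A$, hence $I(p_1)\Phi=\psi$ on all of $Z$ by continuity.

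The main obstacle I anticipate is establishing lower semicontinuity of $T$ at points of the closed set $A$, i.e.\ checking the compatibility of the singleton values on $A$ with the fibre values off $A$: one must show that for $a\in A$, any $M'\in I(p_1)^{-1}(\psi(a))$ near $\varphi(a)$ — more precisely, for any neighbourhood $U$ of $\varphi(a)$ in $I(X\times Y)$ — there is a neighbourhood $W$ of $a$ in $Z$ such that $T(z)\cap U\neq\emptyset$ for all $z\in W$. For $z\in A\cap W$ this is just continuity of $\varphi$; for $z\in W\setminus A$ one needs to move $\varphi(a)$ to a measure whose marginal is the nearby value $\psi(z)$, staying inside $U$. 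I would do this by exhibiting an explicit continuous ``correction'' map $I(p_1)^{-1}(\psi(a))\times(\text{nbhd of }\psi(a)\text{ in }I(X))\to I(X\times Y)$ lifting the second coordinate and fixing the first when the second equals $\psi(a)$, e.g.\ built from max-plus combinations with a fixed section of $p_1$; continuity of such a map (which is a routine weak*-topology estimate using the pseudometrics $\hat d_n$) then gives the required neighbourhood $W$. Once this local lifting near $A$ is in hand, the rest is a direct application of Theorems~\ref{t:ho} and the structure established in Section~\ref{s:mcomp}.
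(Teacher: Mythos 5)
Your architecture is exactly the one the paper intends: the paper's printed proof is in fact truncated, but it opens by recording that $I$ carries open maps to open maps (citing \cite{Z}) and by drawing precisely the lifting square \eqref{myeq} that you solve, and the material of Section~\ref{s:mcomp} (max-plus convex sets as contractible $F$-sets, max-plus convexity of $\hat d_n$-balls, Theorem~\ref{t:ho}) is set up for exactly this selection argument. Your identification of the fibres of $I(p_1)$ as nonempty, closed, max-plus convex sets is correct, as is the device of replacing the fibre by the singleton $\{\varphi(z)\}$ over $z\in A$. The one place where you overcomplicate matters is the step you flag as the main obstacle: lower semicontinuity of $T$ at points of $A$ needs no explicit ``correction map'' (whose construction, as you leave it, is not obviously routine). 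It follows directly from openness of $I(p_1)$: since $p_1$ is open and $I$ preserves open surjections, the assignment $\nu\mapsto I(p_1)^{-1}(\nu)$ is continuous into $\exp I(X\times Y)$, hence lower semicontinuous as a multivalued map, so $O_1=\{z\in Z\mid I(p_1)^{-1}(\psi(z))\cap U\neq\emptyset\}$ is open; it contains $a$ because $\varphi(a)\in I(p_1)^{-1}(\psi(a))\cap U$. Choosing $V$ open in $Z$ with $V\cap A=\varphi^{-1}(U)$, the set $\{z\mid T(z)\cap U\neq\emptyset\}$ contains the open neighbourhood $O_1\cap\bigl((Z\setminus A)\cup V\bigr)$ of $a$, which is all that is required. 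With that observation your argument closes cleanly and yields the softness of $I(p_1)$.
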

\begin{proof} It is proved in \cite{Z} that the map $I(f)$ is
open. Given a commutative diagram
$$\xymatrix{A\ar@{^{(}->}[d]\ar[r]^\varphi&I(X\times Y)\ar[d]^{I(p_1)}\\
Z\ar[r]_\psi& I(X)}$$
\end{proof}

\begin{exam} The following example demonstrates that, like in the
case of probability measures, there exists an open map $f\colon
X\to Y$ of metrizable compacta with infinite fibers such that the
map $I(f)\colon I(X)\to I(Y)$ is not a trivial $Q$-bundle.

We exploit the construction from \cite{D}. Let $S^n$ denote the
$n$-dimensional sphere and ${\mathbb R}P^n$ the $n$-dimensional
real projective space. Let $\eta_n\colon S^n\to{\mathbb R}P^n$
denote the canonical map. The required map is
$$f=\prod_{i=1}^\infty \eta_{2^i-1}\colon X=\prod_{i=1}^\infty
S^{2^i-1}\to Y=\prod_{i=1}^\infty {\mathbb R}P^{2^i-1}.$$ It is
proved in \cite{D} that the map
$$f_k=\prod_{i=1}^k \eta_{2^i-1}\colon \prod_{i=1}^k S^{2^i-1}\to
\prod_{i=1}^k {\mathbb R}P^{2^i-1}$$ has the following property:
the map $P_0(f_k)\colon P_0(\prod_{i=1}^k S^{2^i-1})\to
\prod_{i=1}^k {\mathbb R}P^{2^i-1}$, where
$$P_0\left(\prod_{i=1}^k
S^{2^i-1}\right)=(P(f_k))^{-1}\left(\left\{\delta_x\mid
x\in\prod_{i=1}^k {\mathbb R}P^{2^i-1} \right\}\right)$$ and
$P_0(f_k)$ sends every $\mu\in P_0\left(\prod_{i=1}^k
S^{2^i-1}\right)$ to the unique $x\in\prod_{i=1}^k {\mathbb
R}P^{2^i-1} $ for which $\supp(\mu)\in f_k^{-1}(x)$, has no two
disjoint selections.

Proceeding similarly as in \cite{D} we reduce the problem of
existence of two disjoint sections of the map $I(f)$ to that of
existence of two disjoint selections of the map $I_0(f_k)$, for
some $k$, where the map $I_0(f_k)$ is defined similarly as
$P_0(f_k)$ with $P$ replaced by $I$.

We only
have 
to show that the maps $I_0(f_k)$ and $P_0(f_k)$ are
homeomorphic. Let $\mu=\oplus_{i=1}^n\lambda_i\odot
\delta_{x_i}\in P_0(f)$. Then $$(e^{\lambda_1},\dots,e^{\lambda_n}
)\in \Gamma^n=\{(z_1,\dots,z_n)\in[0,1]^n\mid z_1\oplus\dots\oplus
z_n=1\}.$$ Let $$\Sigma^{n-1}=\{(z_1,\dots,z_n)\in\Gamma^n\mid
z_i=0\text{ for some }i\},$$ then there exists a  point
$(\Lambda_1,\dots,\Lambda_n)\in \Sigma^{n-1}$ such that
$$(e^{\lambda_1},\dots,e^{\lambda_n}
)=s(\Lambda_1,\dots,\Lambda_n)+(1-s)(1,\dots,1),$$ for some $s\in
[0,1]$ (note that such a point $(\Lambda_1,\dots,\Lambda_n)$ is
uniquely determined whenever $(e^{\lambda_1},\dots,e^{\lambda_n}
)\neq(1,\dots,1)$). We then let
$$(\lambda_1',\dots,\lambda_n')=s(\Lambda_1',\dots,\Lambda_n')+(1-s)(1/n,\dots,1/n),$$
where $(\Lambda_1',\dots,\Lambda_n')$ is the point of intersection
of the linear segment 
$$[(\Lambda_1,\dots,\Lambda_n),(1,\dots,1)]$$
with the boundary $$\partial\Delta^{n-1}=\{(x_1,\dots,x_n)\mid
\sum_{x=1}^nx_i=1 \}.$$

One can easily verify that $\mu\mapsto
\mu'=\sum_{i=1}^n\lambda_i'\delta_{x_i}\in P_0(f)$ is a desired
homeomorphism.

\end{exam}

\section{Nonmetrizable case}
The notion of normal functor in the category $\comp$  of compact
Hausdorff spaces was introduced in \cite{S}.

\begin{thm} Let $\tau>\omega_1$. Then the set $I([0,1]^\tau)$ is
not an AR.
\end{thm}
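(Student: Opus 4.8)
The plan is to show that $I([0,1]^\tau)$ fails to be an absolute retract by exhibiting a closed subset in some ambient space onto which no retraction exists, and the natural way to do this for functors in $\comp$ is a cardinality/weight obstruction combined with the fact that an AR of large weight must have ``enough'' continuous maps in and out. Concretely, I would first recall from \cite{S} that $I$ is a normal functor: it preserves weights, preserves intersections, preserves preimages, and is continuous with respect to inverse limits over directed sets of cofinality $>\omega$. The key structural point is that $[0,1]^\tau$ for $\tau>\omega_1$ is the limit of its subproducts $[0,1]^A$ over countable $A\subset\tau$, so $I([0,1]^\tau)=\varprojlim\{I([0,1]^A)\}$, and each $I([0,1]^A)$ is a Hilbert cube by the theorem of the previous section (since $[0,1]^A$ is an infinite metrizable compactum).

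The main step is to invoke Shchepin's spectral theorem for the inverse system $\mathcal{S}=\{I([0,1]^A),\, I(\pr_{AB}); \exp_\omega\tau\}$. If $I([0,1]^\tau)$ were an AR, then it would be an AR of weight $\tau\ge\omega_2$, and one would like to derive a contradiction with a known obstruction: an AR which is an inverse limit of an $\omega_1$-spectrum of Hilbert cubes with ``nontrivial'' bonding maps cannot exist. More precisely, I would argue that $I([0,1]^\tau)$ cannot even be an absolute extensor for metrizable spaces, by producing a metrizable closed pair $(B,C)$ and a map $C\to I([0,1]^\tau)$ that does not extend over $B$. The obstruction arises from the fact that the bonding maps $I(\pr_{AB})\colon I([0,1]^B)\to I([0,1]^A)$ are \emph{not} soft (indeed not even ``trivial $Q$-bundles'' in an appropriate fibered sense) — this is exactly the phenomenon captured by the example in Section~\ref{s:maps}, where $I(f)$ for $f$ a product of Hopf-type maps $\eta_{2^i-1}$ fails to be a trivial $Q$-bundle. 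One embeds a copy of such a ``bad'' map cofinally into the spectrum $\mathcal{S}$, so that $I([0,1]^\tau)$ contains, in a controlled way, the total space of a non-locally-trivial $Q$-bundle as a closed $R^\tau$-like fiber, which an AR of weight $>\omega_1$ cannot absorb.

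Thus the concrete route I would take: (i) reduce via normality of $I$ to the $\omega_1$-spectrum over $\exp_\omega\tau$ of Hilbert cubes; (ii) show that the bonding maps $I(\pr_{AB})$ are, for suitably chosen $A\subset B$ with $B\setminus A$ countably infinite, homeomorphic over the base to the maps $I(g)$ with $g$ a countable power of the maps $\eta_{2^i-1}$, hence are not trivial $Q$-bundles and in fact carry a nonzero obstruction to having disjoint sections; (iii) use the Uncountable-spectral characterization of AR's (if $X=\varprojlim\mathcal{S}$ is an AR and $\mathcal{S}$ is an $\omega_1$-spectrum of ANR's, then a cofinal subspectrum consists of soft, or at least ``approximately soft,'' bonding maps) to reach a contradiction. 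The hard part — and the part I expect to be the real obstacle — is step (ii)/(iii): proving cleanly that the non-triviality of $I(\eta_{2^i-1})$-type bundles genuinely obstructs the AR property at the spectral level, i.e. upgrading the finite-stage statement ``$I_0(f_k)$ has no two disjoint selections'' (established in the example) into a statement about the limit space $I([0,1]^\tau)$ and reconciling it with Shchepin's spectral characterization of absolute retracts. Everything else (normality of $I$, weight preservation, continuity under inverse limits, Hilbert-cube-ness of each $I([0,1]^A)$) is either in \cite{S}, \cite{Z}, or the preceding section and can be quoted directly.
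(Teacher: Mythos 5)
Your proposal does not match the paper's argument, and it contains a step that is concretely false. The paper's proof is a two-line citation: Shchepin proved in \cite{S} that for \emph{any} normal functor $F$ which is not a power functor, any $\tau>\omega_1$, and any metric compactum $K$ with $|K|\ge 2$, the space $F(K^\tau)$ is not an AR; since $I$ is normal (\cite{Z}) and clearly not a power functor, the theorem follows. The whole content is thus the normality of $I$ plus Shchepin's general obstruction theorem, not any analysis of the particular spectrum of $I([0,1]^\tau)$.

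The fatal problem with your route is step (ii). The bonding maps of the spectrum $\{I([0,1]^A),\, I(\pr_{AB})\}$ are induced by the coordinate projections $\pr_{AB}\colon [0,1]^B\to[0,1]^A$, i.e.\ by projections of a product onto a factor, and by Theorem \ref{t:soft} of this very paper the map $I(p_1)\colon I(X\times Y)\to I(X)$ is \emph{soft}. These maps are in no way homeomorphic to $I(g)$ for $g$ a product of the Hopf-type maps $\eta_{2^i-1}$: the Dranishnikov example concerns a quite different map $f$ (a product of double covers $S^{2^i-1}\to\mathbb{R}P^{2^i-1}$), and its obstruction (no two disjoint sections) simply does not occur among projections of cubes. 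Relatedly, the ``spectral obstruction'' you posit in step (iii) --- that an AR cannot be an inverse limit of an uncountable spectrum of Hilbert cubes with nontrivial bonding maps --- is not a theorem and is contradicted by the paper's next result: for openly generated character-homogeneous $X$ of weight $\omega_1$, the soft bonding maps $I(p_{\alpha\beta})$ satisfy FDAP on a cofinal set, whence $I(X)\simeq Q^{\omega_1}$, which \emph{is} an AR. The genuine obstruction at $\tau>\omega_1$ is of a different nature (Shchepin's analysis of normal non-power functors on uncountable powers, via the Dugundji/openly generated property), and your sketch neither reproduces it nor supplies a substitute; as you yourself note, the ``hard part'' of your plan is exactly the part that is missing, and as written it rests on a false identification of the bonding maps.
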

\begin{proof} In \cite{S}, Shchepin proved that, for any normal
functor $F$  which is not a power functor, any cardinal number
$\tau>\omega_1$, and any compact metric space $K$ with $|K|\ge2$,
the functor-power $F(K^\tau)$ is not an AR. In \cite{Z}, it is
proved that $I$ is a normal functor, whence the result follows.
\end{proof}

\begin{thm} Let $X$ be an openly generated character-homogeneous
compact Hausdorff space of weight $\omega_1$. Then the space
$I(X)$ is homeomorphic to $I^{\omega_1}$.
\end{thm}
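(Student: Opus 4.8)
The plan is to recognize $I^{\omega_1}$ (the $\omega_1$-th power of the Hilbert cube, equivalently $[0,1]^{\omega_1}$) via the spectral/inverse-limit characterization of Tikhonov cubes of weight $\omega_1$, and to produce the required short exact inverse spectrum by applying the normal functor $I$ to an inverse spectrum representing $X$. First I would use the hypothesis that $X$ is openly generated of weight $\omega_1$: by the Shchepin spectral theorem, $X$ is the limit of a well-ordered continuous inverse spectrum $\{X_\alpha, p^\alpha_\beta : \alpha<\omega_1\}$ of metrizable compacta with surjective open (even soft, after passing to a cofinal subspectrum) short projections, with $X_0$ a point. Character-homogeneity is what I would use to guarantee that the bonding maps can be taken to have \emph{infinite} fibers everywhere (locally the map $p^{\alpha+1}_\alpha$ looks like a projection $K\times M\to K$ with $M$ infinite metrizable), so that each $I(p^{\alpha+1}_\alpha)$ is a genuinely nontrivial map of Hilbert cubes rather than a homeomorphism.

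Next I would apply the functor $I$ levelwise. Since $I$ is a normal functor (proved in \cite{Z}), it preserves limits of well-ordered continuous spectra, so $I(X) = \varprojlim \{I(X_\alpha), I(p^\alpha_\beta)\}$, again a continuous $\omega_1$-spectrum. Each $I(X_\alpha)$ is a Hilbert cube by the theorem of Section \ref{s:mcomp} (here is where "$X$ infinite" enters — I must check $X_\alpha$ is infinite for all $\alpha$ beyond some point, which follows from $w(X)=\omega_1$ and the open generation). Each short bonding map $I(p^{\alpha+1}_\alpha)\colon I(X_{\alpha+1})\to I(X_\alpha)$ is a map between Hilbert cubes; using that $p^{\alpha+1}_\alpha$ is soft (open with the selection/extension property) one shows, exactly as in Theorem \ref{t:soft} and Theorem \ref{t:mil}, that $I(p^{\alpha+1}_\alpha)$ is soft and satisfies the fibrewise disjoint approximation property — the disjointness maps $g_1,g_2$ being built from a Milyutin section composed with $\zeta$ as in the DAP argument for Theorem in Section \ref{s:mcomp}, together with the fibrewise softness of $p^{\alpha+1}_\alpha$. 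By the Toru\'nczyk–West Theorem \ref{t:bundle}, $I(p^{\alpha+1}_\alpha)$ is then a trivial $Q$-bundle.

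With that in hand, $I(X)$ is the limit of a well-ordered continuous inverse $\omega_1$-spectrum of Hilbert cubes whose short projections are trivial $Q$-bundles; by the standard Shchepin-type recognition theorem for $I^{\omega_1}$ (an inverse spectrum characterization of $[0,1]^{\omega_1}$: a limit of an $\omega_1$-continuous spectrum of Hilbert cubes with soft surjective short projections is homeomorphic to $I^{\omega_1}$), we conclude $I(X)\cong I^{\omega_1}$.

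The main obstacle I anticipate is the levelwise bundle step: it is not enough that each $p^{\alpha+1}_\alpha$ is open — I genuinely need $I(p^{\alpha+1}_\alpha)$ to be \emph{soft} and to have the fibrewise DAP, and softness of $I(p_1)$ was only sketched in Theorem \ref{t:soft} for the trivial-projection case. So the real work is to promote that sketch: first reduce the general soft $p^{\alpha+1}_\alpha$ to a fibrewise-trivial situation using character-homogeneity, then carry out the selection argument via Theorem \ref{t:ho} (checking that fibers of $I(p^{\alpha+1}_\alpha)$ are max-plus convex, hence $F$-sets, hence admit selections), and finally verify fibrewise DAP using a fibrewise Milyutin map and the $\zeta_X$ construction restricted to fibers. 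The rest — preservation of spectra by the normal functor $I$, and the recognition theorem for $I^{\omega_1}$ — is machinery that can be cited.
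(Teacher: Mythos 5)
Your proposal follows essentially the same route as the paper: represent $X$ as the limit of an $\omega_1$-spectrum of infinite metrizable compacta with open bonding maps having no singleton fibers (using open generation and character-homogeneity), apply the normal functor $I$ to get $I(X)$ as the limit of the induced spectrum, show the bonding maps are soft and satisfy the fibrewise disjoint approximation property on a cofinal subspectrum so that Toru\'nczyk--West makes them trivial $Q$-bundles, and conclude $I(X)\cong Q^{\omega_1}\cong I^{\omega_1}$. The gap you flag in promoting the softness statement beyond trivial projections is present in the paper's own argument as well, which simply cites its Theorem \ref{t:soft} and \cite{S1} at that point.
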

\begin{proof}
We can represent $X$ as $\varprojlim\mathcal S$, where $\mathcal
S=\{X_\alpha,p_{\alpha\beta};\omega_1\}$ is an inverse system such
that $p_{\alpha\beta}\colon X_\alpha\to X_\beta$ are open maps for
all $\alpha,\beta<\omega_1$, $\alpha\ge\beta$, and $X_\alpha$,
$\alpha<\omega_1$, are infinite compact metric spaces. Since $X$
is character-homogeneous, we may additionally assume that the maps
$p_{\alpha\beta}$ do not contain singleton fibers.

Then we have $I(X)=\varprojlim I(\mathcal
S)=\{I(X_\alpha),I(p_{\alpha\beta});\omega_1\}$ (see \cite{Z}). By
Theorem \ref{t:soft}, the maps $I(p_{\alpha\beta})$ are soft.
Applying arguments  from \cite{S1} one can find a cofinal subset
$A$ of $\omega_1$ such that, for every $\alpha,\beta\in A$,
$\alpha>\beta$, the map $I(p_{\alpha\beta})$ satisfies the FDAP.
Therefore, by the Toru\'nczyk-West theorem, the map
$I(p_{\alpha\beta})$ is homeomorphic to the projection
$\pi_1\colon Q\times Q\to Q$ onto the first factor. In turn,
$I(X)$ is homeomorphic to $Q^A\simeq Q^{\omega_1}\simeq
I^{\omega_1}$.

\end{proof}

\section{Epilogue}

One can also consider the spaces $I(K^\tau)$, for arbitrary $\tau$
and nondegenerate compact metrizable space $K$. The interesting
results on autohomeomorphisms of the spaces $P(K^\tau)$, for
$\tau>\omega_1$, were
obtained by Smurov \cite{Sm}. One can
conjecture that these results have their counterparts also in the
case of spaces of idempotent measures.

In connection with the mentioned in the introduction result by
Fedorchuk, the following question arises: 
{\it Is the map
$I(f)\colon I(X)\to I(Y)$ is a trivial $Q$-bundle, for an open map
of finite-dimensional compact metric spaces with infinite fibers?}

As it was remarked above, one can also consider the spaces $I(X)$
for noncompact metric space $X$. It looks plausible that the
results on topology of spaces of probability measures proved in
\cite{BC} should
have their counterparts also for the idempotent
measures.

\section*{Acknowledgements}
This research was supported by SRA grants P1-0292-0101-04 and  BI-UA/07-08-001.

\end{document}